\documentclass[a4paper,11pt]{article}%{report} 
\usepackage{geometry}
 \geometry{
 a4paper,
 total={210mm,297mm},
 left=25mm,
 right=25mm,
 top=35mm,
 bottom=35mm,
 }
\usepackage[affil-it]{authblk}
\usepackage{amsthm,amsmath,amssymb,amsfonts}
\usepackage{microtype,mathrsfs}
\usepackage{epsfig,color}
\usepackage{mathrsfs}
\usepackage{dsfont}
\usepackage{hyperref}
\usepackage{enumerate}
\usepackage{enumitem}
\usepackage{comment}
%\allowdisplaybreaks

\setlist[enumerate]{leftmargin=*}

\let\OLDthebibliography\thebibliography
\renewcommand\thebibliography[1]{
  \OLDthebibliography{#1}
  \setlength{\parskip}{3pt}
  \setlength{\itemsep}{0pt plus 0.3ex}
}

\theoremstyle{plain}
\newtheorem{thm}{Theorem}[section]
\newtheorem{defn}[thm]{Definition}
\newtheorem{prop}[thm]{Proposition}
\newtheorem{cor}[thm]{Corollary}
\newtheorem{rmk}[thm]{Remark}
\newtheorem{lma}[thm]{Lemma}

\newtheorem{exm}[thm]{Example}

\newcommand{\eq}{\begin{equation}}
\newcommand{\qe}{\end{equation}}

\def\N{{\rm I\kern-0.16em N}}
\def\R{{\rm I\kern-0.16em R}}
\def\E{{\rm I\kern-0.16em E}}
\def\P{{\rm I\kern-0.16em P}}
\def\F{{\rm I\kern-0.16em F}}
\def\B{{\rm I\kern-0.16em B}}
\def\Z{{\rm I\kern-0.46em Z}}
\def\C{{\rm I\kern-0.46em C}}
\def\G{{\rm I\kern-0.50em G}}

\newcommand{\mF}{\mathcal F}
\newcommand{\indic}[1]{\mathbf 1_{#1}}

\title{An algebra of Stein operators}
\author{Robert E. Gaunt\footnote{School of Mathematics, The University of Manchester, Manchester M13 9PL, UK \texttt{robert.gaunt@manchester.ac.uk} }, \
Guillaume Mijoule 
\footnote{MOKAPLAN, INRIA de Paris, 2 rue Simone Iff, CS 42112, 75589 Paris Cedex 12, France  \texttt{guillaume.mijoule@inria.fr}. } \  and Yvik Swan\footnote{Universit\'e de Li\`ege, Sart-Tilman, All\'ee de la d\'ecouverte
  12, B-8000 Li\`ege, Belgium  \texttt{yswan@uliege.be}. }} 
\affil{The University of Manchester, INRIA and Universit\'e de Li\`ege}
\date{}
\begin{document}
\maketitle

\begin{abstract}
  We build upon recent advances on the distributional aspect of Stein's method to propose a novel and flexible technique for computing Stein operators for random variables that can be written as products of independent random variables. We show that our results are valid for a wide class of distributions including normal, beta, variance-gamma, generalized gamma and many more.  Our operators are $k$th degree differential operators with polynomial coefficients; they are straightforward to obtain even when the target density bears no explicit handle.  As an application, we derive a new formula for the density of the product of $k$ independent symmetric variance-gamma distributed random variables.   

  \

\noindent {\sl AMS classification\/}: Primary 60E15; Secondary 62E15

% 60B10

%26D10

\

\noindent {\sl Key words\/}:
{Stein's method},
{Stein operators},
{product distributions},
%{Product Normal},
{variance-gamma distribution}.
%{PRR distribution},
\end{abstract}

%\tableofcontents

%\section{Introduction} 

%\subsection{Stein's method}

\section{Introduction} 

%\subsection{Stein's method}

In 1972, Charles Stein (1920--2016) \cite{stein} introduced a powerful
method for estimating the error in normal approximations.  The method
was adapted to the Poisson distribution by Louis Chen in \cite{chen
  0}, and has since been extended to a very broad family of
probability distributions.  The general procedure for a given target
distribution $p$ is as follows.  In the first step, one obtains a
suitable operator $A$ acting on a class of test functions
$\mathcal{F}$ such that $\E[Af(X)]=0$ for all $f\in \mathcal{F}$; the
operator $A$ is called a \emph{Stein operator} for $p$.  For
continuous distributions, $A$ is a typically a differential operator;
for the standard normal distribution, the classical operator is
$Af(x)=f'(x)-xf(x)$.  One then considers the so-called \emph{Stein
  equation}
\begin{equation}\label{seqn}Af_h(x)=h(x)-\E h(X),
\end{equation}
where $h$ is a real-valued test function.  If $A$ is well chosen then,
for a given $h$, the Stein equation (\ref{seqn}) can be solved for
$f_h$.  The second step of the method consists of obtaining this
solution and then bounding appropriate lower order derivatives.
Evaluating both sides of (\ref{seqn}) at a random variable of interest
$W$ and taking the supremum over all $h$ in some class of functions
$\mathcal{H}$ leads to the estimate
\begin{equation}\label{deqn}d_\mathcal{H}(\mathcal{L}(W),\mathcal{L}(X)):=\sup_{h\in\mathcal{H}}|\E
  h(W)-\E h(X)|\leq \sup_{f_h}|\E[Af_h(W)]|,
\end{equation}
where the final supremum is taken over all $f_h$ that solve
(\ref{seqn}).  The third and final step of the method involves
developing appropriate strategies for bounding the expectation on the
right hand side of (\ref{deqn}). This is of interest because many
important probability metrics (such as the Kolmogorov and Wasserstein
metrics) are of the form
$d_\mathcal{H}(\mathcal{L}(W),\mathcal{L}(X))$.  Moreover, in many
settings bounding the expectation $\E[Af_h(W)]$ is relatively
tractable, and as a result Stein's method has found application in
disciplines as diverse as random graph theory \cite{bhj92}, number
theory \cite{harper}, statistical mechanics \cite{el10} and quantum
mechanics \cite{mps17}.  We refer to the survey paper \cite{ross} as
well as to the monographs \cite{nourdinpec2011, chgolshao2012} for a
deeper look into some of the fruits of Charles Stein's seminal
insights, particularly in the case where the target is the normal
distribution.

%\subsection{Stein operators}  

% In this paper, we introduce general techniques and theory for
% obtaining the first ingredient of Stein's method.  
%In the sequel we adopt the following lax definition:

The linchpin of the method is the operator $A$ whose properties are
crucial to the success of the whole enterprise. %  and, clearly, the whole
% enterprise is doomed if $A$ is not well chosen
 In the sequel, we
concentrate exclusively on \emph{differential} Stein operators (some
operators in the literature are integral or even fractional, see e.g.\
\cite{Liu,ah17}) and adopt the following lax definition:

\begin{defn}\label{def:weaksteinop}
  A linear differential operator $A$ acting on a class $\mathcal{F}$
  of functions is a \emph{Stein operator} for $X$ if (i) $Af \in L^1(X)$
 and (ii) $\E \left[ Af(X) \right]=0$ for all
  $f \in \mathcal{F}$.  
 \end{defn}
 
 There are infinitely many Stein operators for any
 given target distribution. For instance, if the distribution is known
 (even if only up to a normalizing constant) then the ``canonical''
 theory from \cite{ley} applies, leading to entire {families} of
 operators. This approach provides natural first order polynomial
 operators e.g.\ for target distributions which belong to the Pearson
 family \cite{schoutens} or which satisfy a diffusive assumption
 \cite{dobler beta,kusuotud}.  In some cases, one may rather apply a
 {duality} argument. For instance the p.d.f$.$
 $\gamma(x) = (2\pi)^{-1/2}\mathrm{e}^{-x^2/2}$ of the standard normal
 distribution satisfies the first order ODE
 $\gamma'(x) + x \gamma(x)=0$ leading, by integration by parts, to the
 already mentioned operator $ Af(x) = f'(x) -xf(x)$. This is
 particularly useful for densities defined implicitely via ODEs.  % Similarly the
%  exponential distribution has natural operator
% \begin{equation}
%   \label{eq:12}
%    Af(x) = xf'(x) +(1-x)f(x)
% \end{equation}
% with $f \in \mathcal{F}$ the class of all differentiable functions
% such that both $xf'(x)$ and $(1-x)f(x)$ are integrable with respect to
% the normal measure. 
 Such are by no means the only methods for deriving differential Stein
 operators and, for any given $X$, one can easily determine an entire
 ecosystem of Stein operators, leading to the natural question of
 \emph{which operator to choose}.  % A general canonical theory of
 % Stein operators is available in \cite{ley}, and many other general
 % theories have been proposed in recent years, see
 % https://sites.google.com/site/steinsmethod/ for an overview of the
 % quite large literature on this topic.  The first key to setting up
 % Stein's method for a target $X$ is to identify a ``good'' operator,
 % i.e.\ an operator $A$ whose properties are relevant to the problem at
 % hand.
 One natural way to sieve through the available options is to further
 impose that the chosen operator be {characterizing} for $X$,
 i.e.\ that if some $Y$ enjoys the property that $\E[f(Y)]=0$ for all
 $f \in \mathcal{F}$, then $Y {=} X$ (equality in law). Such
 requirements often do not suffice and will not be imposed here; our
 focus will rather be on another crucial quality of a ``good'' Stein
 operator: tractability. More precisely, we will focus solely on Stein
 operators which satisfy the next definition.
 \begin{defn}\label{def:polyopera}
   We call a Stein operator \emph{polynomial} if it can be written as
   a finite sum $A = \sum_{i,j} a_{ij} M^iD^j $ for real coefficients
   $a_{ij} \in \R$, with $M(f) = \left(x\mapsto xf(x) \right)$ and
   $D(f) = \left(x\mapsto f'(x) \right)$.
 \end{defn}
 
 Except in the most basic cases, determining polynomial Stein
 operators is not an easy task. Interestingly, many densities do not
 admit a first order polynomial Stein operator and it is necessary to
 consider higher order operators: \cite{gaunt vg} obtains a second
 order operator for the entire family of variance-gamma distributions
 (see also \cite{eichtha14} and \cite{gaunt gh}), \cite{pike} obtain a
 second order Stein operator for the Laplace distribution, and
 \cite{pekoz} obtain a second order operator for the PRR distribution,
 which has a density that can be expressed in terms of the Kummer $U$
 function.  % See also \cite[Examples 4 and 42]{ley} where the same
 % operators are obtained through direct ODE arguments; as one can see
 % such brute force calculations can rapidly lead to intractable
 % computations. 
 If the p.d.f$.$ of $X$ is defined in terms of special
 functions (Kummer $U$, Meijer $G$, Bessel, etc.)  which are
 themselves defined as solutions to explicit $d$th order differential
 equations then the duality approach shall yield a tractable
 differential operator with explicit coefficients.

 In many cases, the target distribution is not even defined
 analytically in terms of its distribution but rather
 probabilistically, as a statistic (sum, product, quotient) of
 independent contributions.  Explicit knowledge of the density of such
 random variables is then generally unavailable and, in order to
 obtain polynomial Stein operators for such objects,
% is generally out of reach, and
new approaches must be devised. In \cite{aaps16,aaps18}, a
Fourier-based approach is developed for identifying appropriate
operators for arbitrary combinations of independent chi-square
distributed random variables.  In \cite{gaunt ngb,gaunt pn}, an
iterative conditioning argument is provided for obtaining operators
for (mixed) products of independent random beta, gamma and mean-zero
normal random variables. 
In this context we are naturally lead to the following research
problem:
\begin{quote}
\emph{Given independent random variables
  $X_1, \ldots, X_d$ with polynomial operators $A_1, \ldots, A_d$,
  respectively, can one deduce a tractable polynomial Stein operator
  for statistics of the form $X = F(X_1, \ldots, X_d)$?
}
\end{quote} 
In this paper, we provide an
answer for functionals of the form
$F(x_1, \ldots, x_d) = x_1^{\alpha_1}\cdots x_d^{\alpha_d}$ with
$\alpha_i \in \R$ and $X_i$'s with polynomial Stein operator
satisfying a specific commutativity assumption (Assumption~3 below). 
% A general answer to this question for arbitrary functionals of
% arbitrary targets is out of reach of the current state-of-the-art on
% Stein's method.  In fact, even in the case of products of normal
% random variables there are great difficulties and important questions
% remain -- to this day -- unsolved.  

This paper is mostly devoted to the problem of obtaining Stein
operators, which allows for a focused treatment of their theory.  We
acknowledge, though, that further work is required before the Stein
operators obtained in this paper can be used to prove approximation
theorems via Stein's method.  However, the theory of Stein operators
to which this paper contributes is of importance in its own
right. Indeed, there are now a wide variety of techniques which allow
one to obtain useful bounds on solutions to the resulting Stein
equations (see, for example, \cite{kumar16,dgv,ah17}) and which can be
adapted to the operators that we derive.  Also, and this has now been
demonstrated in several papers such as
\cite{nourdinpecswan2013,nourdinpecswan2014,aaps16,amps16,aaps18},
Stein operators can be used for comparison of probability
distributions directly without the need of solving Stein equations;
such an area is also the object of much interest.  Finally, we stress
that Stein operators are also of use in applications beyond proving
approximation theorems; for example, in obtaining distributional
properties \cite{gaunt pn, gaunt ngb} and other surprising
applications include the derivation of formulas for definite integrals
of special functions \cite{gaunt ram}.  Indeed, in Section
\ref{sec:form-dens-char}, we propose a novel general technique for
obtaining formulas of densities of distributions that may be
intractable through other existing methods.

The outline of the paper is as follows. In Section
\ref{sec:general-results} we identify the key elements allowing to
construct a form of ``operator algebra'' which provides -- by
elementary calculations -- polynomial operators for $X$'s which can be
written as products (see Section \ref{sec:prod-distr}) and powers (see
Section \ref{sec:powers-inverse-distr}) of independent
contributions. We apply the theory in Section \ref{sec:applications-1}
to recover several operators from contemporary literature on Stein's
method and also to provide many new ones. Finally, in Section
\ref{sec:form-dens-char} we consider an application of operators obtained by our method
to finding densities of product distributions.

\section{An algebra of Stein operators}
\label{sec:general-results}

\subsection{About the class of functions on which the operators are defined}

\label{sec:classfunctions}

% We first briefly discuss the class of functions $\mathcal F$ on which
% the Stein operators are defined. 

All random variables we consider in the paper satisfy the following
assumption: 

\

\noindent \textbf{Assumption 1:} $X$ admits a smooth density $p$ with respect to the
Lebesgue measure on $\R$; this density is defined and non-vanishing on
some (possibly unbounded) interval $J \subseteq \R$. 

\

By definition, a Stein operator $A$ for a random variable $X$ acts on
a collection $\mathcal{F}$ of functions for which the expectations
vanish. Although determining the largest possible set $\mathcal F$ may
be an interesting quest, it will not be part of ours because this can
only be done on a case-by-case basis % (and, even in the well-trodden
% case of the normal distribution with operator $Af(x) = f'(x) - xf(x)$,
% there is no simple characterisation of this largest set). 
and the focus of our paper is the construction of an algebra allowing
to generate tractable operators.  In order to ensure that the
operators we obtain do not act on trivial classes of functions (e.g.\
$\mathcal{F} = \left\{ \emptyset \right\}$), we shall
simply % impose throughout the paper that our operators act on
% a ``large'' class $\mathcal F$, by
impose the following assumption: 

\

\noindent \textbf{Assumption 2:}  $X$   admits an
operator $A$ acting on $\mathcal F$ which contains the set of smooth
functions with compact support $\mathcal C_0^\infty(\R)$.  

\

Assumption~2 is not too restrictive in our context (see Remark
\ref{rmk:about-class-funct} below), although we will need to reinforce
it slightly in Section \ref{sec:powers-inverse-distr}.  A collateral
benefit of restricting to random variables satisfying Assumptions 1
and~2 is that we now may consider samples $X_1, \ldots, X_n$ of random
variables with respective operators $A_1, \ldots, A_n$ acting on their
respective classes $\mathcal{F}_1, \ldots, \mathcal{F}_n$ and we are
ensured that $\bigcap_{i=1}^n\mathcal{F}_i \supseteq C_0^{\infty}(\R)$.
%(or $C_0^{\infty}(\R^+)$). 
Consequentially, it is guaranteed that any
statements on the joint behaviour of any of the $A_i$ will also hold on
non-trivial classes of functions.
\begin{rmk}
  In some cases (for instance when $X$ has exponential moments), one
  can easily extend $\mathcal{F}$ to smooth functions $f$ such that
  $f^{(k)}$ has at most polynomial growth for all $k\geq 0$ (and in
  particular, to polynomials). 
  %such extensions are left to the interested reader on a case by case basis. 
\end{rmk}

\begin{rmk} \label{rmk:about-class-funct} 
Under Assumption~1, the ``canonical" Stein operator in \cite{ley} is
  $A_c \;: \; f \mapsto{(fp)'/{p}} = f' + ({p'}/{p}) f$ and we
  have $\E[A_c f (X)] = 0$ at least for those functions $f$ such that
  $f(x) p(x) \rightarrow 0$ on the border of $J$.  If
  $J = (-\infty,+\infty)$, it is clear that any
  $f \in \mathcal C_0^\infty(\R)$ satisfies $f(x) p(x) \rightarrow 0$
  on the border of $J$, hence Assumption 2 is automatically
  satisfied. On the other hand, if $J$ has a finite border, say
  $J= (a,+\infty)$ with $a\in\R$, then $f \in \mathcal C_0^\infty(\R)$
  does not imply necessarily that $f p$ vanishes on the border (see
  \cite{chatt} for the case of exponential approximation). An easy
  workaround in this case is to apply the operator to $(x-a) f$
  instead of $f$, which leads to the new operator $A = A_c
  (M-aI)$. This operator satisfies $\E[Af(X)] = 0$ for all
  $f \in \mathcal C_0^\infty(\R)$. Of course, this has to be done on
  both borders of the support if they are both are finite.
\end{rmk}

 %For instance, the canonical Stein operator for the Beta
% distribution is 
% $A_c f = f' + \left( \frac{\alpha-1}{x} - \frac{\beta-1}{1-x} \right)
% f$, but we will use instead the operator
% $A_c M(I-M) f = x(1-x)f' - [(\alpha+\beta)x-\alpha] f$ (which is
% actually the one commonly used in the Stein literature; see
% \cite{dobler beta}, \cite{goldstein4}).

% In some cases, we will also have to use different operators that the
% ones commonly used in the Stein literature, in order for them to
% satisfy Assumption~1. For instance, the standard Stein operator for
% the Gaussian distribution is $A_c f = f'-xf$, but we will use
% instead a right-multiplication by $M$ of it,
% $A f=A_c M f = xf' + (1-x^2) f = (T_1 - M^2) f$.  One could argue
% that by doing so, we restrict the standard operator to those
% functions $f$ such that $f(0)=0$. But the artificial
% right-multiplication by $M$ we applied on the underlying Stein
% operator, is still present in the operator for a product
% distribution: for instance, we will show that a Stein operator for
% the product of two Gaussian distributions is $T_1^2 - M^2$, which
% can be rewritten $DMDM-M^2$, and we can 'divide' by $M$ on the right
% (or equivalently, apply the operator to $f/x$ instead of $f$), to
% obtain the operator $DMD-M$ (which is the one of \cite{gaunt
% vg}). The last operator acts (at least) on
% $\mathcal C_0^\infty(\R)$.

\subsection{The building blocks}
\label{sec:notations}

Let us record some notation regarding the different operators that
will be used throughout the paper.  We let $\mathcal F \supseteq \mathcal
C_0^{\infty}(\R)$; $M$ is the multiplication operator: $M(f) =
\left(x\mapsto xf(x)  \right)$; $D$ the differentiation operator $D(f)
= f'$; $I$ the identity of $\mathcal F$; for $a \in \R\setminus\{0\}$,
$\tau_a (f) = \left( x \mapsto f(ax) \right)$; and $\forall r \in \R,
T_r = MD + r I$.  

%$\mathcal F$ is a space of smooth functions, stable under multiplication and differentiation. We assume that $\mathcal F$ satisfies Constraint \ref{item:1}.

 \begin{rmk}\label{rmk:infiniteparam}
  We will also need to consider the limit of operator $T_r$ as
  $r\to\infty$. Although such a limit is badly defined, we note how
  $\lim_{r\to\infty} r^{-1} T_r = I$ (pointwisely for any $f \in \mathcal F$). % Identities \eqref{eq:recT} also hold, after the appropriate rescaling, as
% $r\to\infty$.  
\end{rmk}

Our starting point is
the following extension of one of the main results of \cite{gaunt
  ngb}:

\begin{prop}
\label{prop:1}
Let $\mathcal F \supseteq \mathcal C_0^{\infty}(\R)$.  Assume $X,Y$ are
random variables with respective Stein operators
\begin{eqnarray}
A_X &=& L_X - M^p K_X,\label{eq:1}\\
A_Y &=& L_Y - M^p K_Y, \label{eq:2}
\end{eqnarray}
where $p\in \N$ and where the operators $L_X,K_X,L_Y,K_Y$ commute with
each other and with every $\tau_a$, $a \in \R$. Then, if $X$ and $Y$
are independent, \eq L_XL_Y - M^p K_XK_Y
\qe
is  a Stein operator for $XY$. 

\end{prop}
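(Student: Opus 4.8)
The plan is to establish $\E[(L_X L_Y - M^p K_X K_Y)f(XY)] = 0$ for every $f \in \mathcal C_0^{\infty}(\R)$ directly, by applying the two one‑dimensional Stein identities $\E[A_X g(X)] = 0$ and $\E[A_Y h(Y)] = 0$ in succession, each time trading an ``$L$'' factor for the corresponding ``$K$'' factor at the cost of a power of $M$. The mechanism making this work is the relation $B\tau_a = \tau_a B$ for $B \in \{L_X, K_X, L_Y, K_Y\}$ and $a \neq 0$: since $(\tau_a g)(x) = g(ax)$, this says precisely that applying $B$ to the function $x \mapsto f(xy)$ (with $y \neq 0$ frozen) returns $x \mapsto (Bf)(xy)$, i.e.\ that $B$ can be pushed through the product structure of $f(xy)$. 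I shall also use that, $L_X, K_X, L_Y, K_Y$ being polynomial operators (finite combinations of $M$'s and $D$'s), each of them maps $\mathcal C_0^{\infty}(\R)$ into itself, as does every $\tau_a$ with $a \neq 0$; this is what lets the Stein identities be re‑applied to the intermediate functions produced along the way.

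First I would freeze $y \neq 0$ and apply the Stein identity for $X$ to the test function $\tau_y(L_Y f) \in \mathcal C_0^{\infty}(\R)$: using $L_X\tau_y = \tau_y L_X$ and $K_X\tau_y = \tau_y K_X$ this gives $\E[(L_X L_Y f)(Xy)] = \E[X^p (K_X L_Y f)(Xy)]$. Integrating over the law of $Y$ (via independence and Fubini) and then commuting $K_X$ past $L_Y$ yields
\[
  \E[(L_X L_Y f)(XY)] = \E\big[X^p\,(L_Y K_X f)(XY)\big].
\]
Next I would freeze $x \neq 0$, set $r := K_X f \in \mathcal C_0^{\infty}(\R)$, and apply the Stein identity for $Y$ to $\tau_x r$, obtaining $\E[(L_Y r)(xY)] = \E[Y^p (K_Y r)(xY)]$; multiplying by $x^p$, integrating over the law of $X$, and commuting $K_Y$ past $K_X$ gives
\[
  \E\big[X^p\,(L_Y K_X f)(XY)\big] = \E\big[X^p Y^p\,(K_Y K_X f)(XY)\big] = \E\big[(XY)^p\,(K_X K_Y f)(XY)\big] = \E\big[(M^p K_X K_Y f)(XY)\big].
\]
Chaining the two displays gives $\E[(L_X L_Y f)(XY)] = \E[(M^p K_X K_Y f)(XY)]$, which is exactly the assertion.

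I expect the algebra above to be essentially routine once the commutation $B\tau_a = \tau_a B$ is in hand; the only genuine points of care are bookkeeping ones. The first is to check that the intermediate objects $L_Y f$, $K_X f$ and their dilates really do stay in $\mathcal C_0^{\infty}(\R)$, so that the hypotheses of the two Stein identities hold at each stage. The second, and the one I would flag as the main subtlety, is that the identity $\E[(L_X L_Y f)(Xy)] = \E[X^p(K_X L_Y f)(Xy)]$ is obtained only for $y \neq 0$, because $\tau_y g$ loses its compact support at $y = 0$; hence the integration over $Y$ in the first step is legitimate as written precisely when $\P(Y = 0) = 0$. This costs nothing in the applications of this paper, whose targets are absolutely continuous, and in general one simply performs the first conditioning against whichever of $X$, $Y$ has no atom at the origin (in the second step the extra factor $X^p$ already annihilates the contribution of $\{X = 0\}$, so no analogous issue arises there, at least when $p \geq 1$).
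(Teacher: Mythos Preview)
Your argument is correct and follows essentially the same route as the paper's proof: both condition first on one variable to trade $L_X$ for $M^pK_X$, then condition on the other to trade $L_Y$ for $M^pK_Y$, using the commutation $B\tau_a=\tau_aB$ at each step. Your additional remarks about the intermediate functions staying in $\mathcal C_0^\infty(\R)$ and the null-set issue at $y=0$ are valid technical refinements that the paper's proof passes over silently.
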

\begin{proof}
Let $f \in \mF$. Using a conditioning argument and the commutative property between the different operators, we have that
\begin{align*}
\E[L_XL_Y f (XY)] & = \E [\E[ \tau_Y L_X L_Y f (X)\, | Y \,] ] \\
& = \E [\E[  L_X \tau_Y L_Y f(X)\, | Y \,] ]\\
& = \E [\E[  M^p K_X \tau_Y L_Y f(X)\, | Y \,] ]\\
&=  \E [\E[  X^p \tau_Y K_X  L_Y f (X)\, | Y \,] ]\\
&=  \E [X^p  K_X  L_Yf (XY) ]\\
&=  \E [X^p  \E[\tau_X K_X  L_Y f(Y)\, | \,X ] ] \\
&= \E [X^p  \E[L_Y  \tau_X K_Xf (Y)\, | \, X] ]\\
&=\E [X^p  \E[M^p  K_Y\tau_X K_Xf (Y)\, | \, X] ]\\
&=\E [X^p Y^p   K_Y K_Xf (XY)\, ],
\end{align*}
which achieves the proof.
\end{proof}

The assumption that the operators commute with scaling $\tau_a$
is crucial for the proof of Proposition \ref{prop:1}; it will also
reveal itself to be the linchpin of our ``operator algebra''.
Restricting to first order differential operators we deduce that the fundamental
operators $L_X,K_X,L_Y,K_Y$ need to be of the form 
$ T_r = MD + r I,$
as these are the only first
order polynomial Stein operators which commute with the multiplication
operator (at least for a non trivial  class $\mathcal F$). A
fundamental subalgebra of linear operators, which will play a
prominent role in this work, is the algebra $\mathcal T$ composed
  of all linear combinations and compositions of 
  $T_r$'s for $r \in \R$. 
Note also that $\mathcal T$ is the set of operators that are
polynomials of the operator $MD$. Since each $T_r$ commutes which each
$\tau_a$, so does any element of $\mathcal T$.  These considerations
naturally lead to the following assumption which will underpin the
entire theory we develop:

\

\noindent \textbf{Assumption~3:} There exist $k\in \N$ and linear
operators $L, K$ such that $X$ admits a Stein operator (in the sense
of Definition \ref{def:weaksteinop}) of the form
\begin{equation}
  \label{eq:29}
  A = L - M^k K,
\end{equation}
where the operators $L$, $K$ are elements of $\mathcal T$.

\

In most situations  that we consider, however, the operators $K$ and $L$ will be products of $T_r$ operators.  We now collect some useful relations for the operators that will be used throughout this paper.

\begin{lma}Let $r,r'\in\R$, $a\in \R\setminus\{0\}$ and $n\in\N$. Then, acting on the class of functions $\mathcal F \supseteq \mathcal C_0^{\infty}(\R)$, the operators $(M,D,T_r,\tau_a)$ satisfy the following relations:
\[\tau_a M = a \, M \tau_a  \quad  \mbox{and} \quad
D \tau_a = a \, \tau_a D, 
\]
and 
\begin{equation}
\label{eq:recT}T_r M^n = M^n T_{r+n} \quad  \mbox{and} \quad T_r D^n = D^n T_{r-n}.
\end{equation}
Additionally, $T_r$ and $T_{r'}$ always commute, and every $T_r$ commutes with every $\tau_a$.
\end{lma}

\begin{comment}
Using the fact that $DM = MD +I$, one can easily check that $\forall r \in \R\cup \{\infty\}, \forall n \in \N$,
\eq
\label{eq:recT}
 T_r M^n = M^n T_{r+n},
\qe
and
\eq
T_r D^n = D^n T_{r-n},
\qe
with the usual convention that $r+\infty = \infty$. It is also direct to see that
$$\tau_a M = a \, M \tau_a,$$
and
$$D \tau_a = a \, \tau_a D.$$
Note also that $T_r$ and $T_{r'}$ always commute (since they are polynomials, of degree 1, in $MD$), and that every $T_r$ commutes with every $\tau_a$.
\end{comment}

\begin{proof}(i) Here, and throughout the proof, let $f\in \mathcal F$.  Then, $\tau_aMf(x)=\tau_axf(x)=axf(ax)=aM\tau_af(x)$ and $D\tau_af(x)=Df(ax)=af'(ax)=a\tau_aDf(x)$, as required.

(ii) By the product rule of differentiation, one has that $DM=MD+I$.  Therefore, $T_rM=MDM+rM=M^2D+(r+1)M=MT_{r+1}$, and the first relation now follows from direct recurrence. Similarly, we have 
$T_rD=rMD^2+D=rDMD+(r-1)D=DT_{r-1},$
and the second relation now follows from direct recurrence.

(iii) That $T_r$ and $T_{r'}$ commute follows since they are polynomials, of degree 1, in $MD$.  Also, $T_r\tau_af(x)=T_rf(ax)=axf'(ax)+rf(ax)=\tau_a(xf'(x)+rf(x))=\tau_aT_rf(x)$, and therefore $T_r$ and $\tau_a$ commute.
\end{proof}

Note also that if we define, for an operator $L$, $L\mathcal T: \{ L A \; ; \; A \in \mathcal T \}$, and similarly $\mathcal T L$, then a direct consequence of \eqref{eq:recT} is that for any $n \in \mathbb Z_+$,
\eq
\label{eq:commT}
M^n \mathcal T = \mathcal T M^n \quad  \mbox{and} \quad D^n \mathcal T = \mathcal T D^n.
\qe

\subsection{More on Assumption~3}

The purpose of this section is to give a simple criterion to identify
operators $A$ that satisfy Assumption~3. We have the following
criterion for $X$ to satisfy Assumption~3, when its Stein operator is
written in an expanded form.
\begin{lma}\label{lma:building-blocks}
$X$ satisfies Assumption~3 if, and only if, $X$ has a Stein operator of the form $ \sum_{i, j}a_{ij}M^jD^i$ with
  $\# \left\{ j-i\, | \, a_{ij}\neq0 \right\}\le 2$.
\end{lma}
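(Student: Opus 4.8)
The plan is to recast the statement in terms of \emph{weights}: to a monomial operator $M^jD^i$ I attach the integer $j-i$, and to $\sum_{i,j}a_{ij}M^jD^i$ its weight set $\{\,j-i : a_{ij}\neq 0\,\}$, so the claimed normal form is simply ``weight set of size $\le 2$''. Two preliminary facts do most of the work. First, iterating $DM = MD+I$ (equivalently \eqref{eq:recT}) gives a unitriangular change of basis between $\{(MD)^n : n\ge 0\}$ and $\{M^nD^n : n\ge 0\}$; since $\mathcal T$ is the set of polynomials in $MD$, this identifies $\mathcal T$ with $\mathrm{Span}\{M^nD^n : n\ge 0\}$, i.e. with the space of weight-$0$ operators. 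Second, right-composition of any operator with $M$, resp. with $D$, maps $\mathcal C_0^\infty(\R)$ into itself — hence sends a Stein operator for $X$ to a Stein operator for $X$, since $\mathcal C_0^\infty(\R)\subseteq\mathcal F$ — and, by $D^iM = MD^i + iD^{i-1}$, shifts \emph{every} weight uniformly by $+1$, resp. $-1$.

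The ``only if'' direction is then immediate: if $A = L - M^kK$ with $L,K\in\mathcal T$, write $L = \sum_i l_iM^iD^i$ and $K = \sum_i\kappa_iM^iD^i$ by the first fact, so that $A = \sum_i l_iM^iD^i - \sum_i\kappa_iM^{k+i}D^i$ has weight set inside $\{0,k\}$, of size at most $2$.

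For the ``if'' direction, let $B=\sum_{i,j}a_{ij}M^jD^i$ be a Stein operator for $X$ with weight set $W$, $|W|\le 2$ (the case $B=0$ being trivial); set $w_0=\min W$ and $w_1=\max W$. I would replace $B$ by $BM^{-w_0}$ if $w_0\le 0$, or by $BD^{w_0}$ if $w_0>0$. By the second fact this is again a Stein operator for $X$; its weight set lies in $\{0,\,w_1-w_0\}$ and still contains $0$, because the (nonzero) weight-$w_0$ part of $B$ is shifted uniformly to weight $0$ and cannot be cancelled against the shifted weight-$w_1$ part (which, if distinct, lands at weight $w_1-w_0>0$). Splitting this normalised operator into its weight-$0$ part $L$ and its weight-$(w_1-w_0)$ part, the latter factors as $M^{w_1-w_0}K$ with $L,K\in\mathcal T$ by the first fact, and the operator equals $L-M^{w_1-w_0}(-K)$ — a Stein operator for $X$ of the shape required by Assumption~1.

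I expect the ``if'' direction to carry the content, and within it the normalisation step to be the one needing care: one must verify both that right-composition with a power of $M$ or $D$ stays within the class of Stein operators (this is exactly why it matters that $\mathcal C_0^\infty(\R)$ lies in $\mathcal F$ and is stable under $M$ and $D$) and that it cannot annihilate the weight-$w_0$ component, i.e. that right-multiplication by $M$ or $D$ is injective on nonzero polynomial-coefficient differential operators — an elementary consequence of the linear independence of the $M^jD^i$. The change of basis between $\{(MD)^n\}$ and $\{M^nD^n\}$ and the weight bookkeeping are routine.
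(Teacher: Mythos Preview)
Your proof is correct and follows essentially the same approach as the paper's: both arguments introduce the weight $j-i$, identify $\mathcal T$ with the weight-$0$ operators via the change of basis between $\{(MD)^n\}$ and $\{M^nD^n\}$, and then normalise an operator with two weights so that the smaller weight becomes $0$.

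One small difference is worth recording. In the case where the minimum weight $w_0$ is positive, the paper factors $M^{w_0}$ out on the right and then ``simplifies by $M^{w_0}$'', i.e.\ applies the original operator to $x\mapsto f(x)x^{-w_0}$; this is slightly awkward because $f(x)x^{-w_0}$ need not lie in $\mathcal C_0^\infty(\R)$. Your choice to right-compose with $D^{w_0}$ instead is cleaner: it shifts all weights down by $w_0$, keeps you inside $\mathcal C_0^\infty(\R)$, and the injectivity check you sketch (right-multiplication by $D$ on polynomial-coefficient differential operators is injective because every smooth function is a derivative) is straightforward. In the case $w_0\le 0$ both proofs do the same thing, right-multiplying by $M^{-w_0}$.
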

\begin{proof}
Let us give a preliminary result. Let $L = M^{k_1} D^{l_1} \ldots M^{k_n} D^{l_n}$. Let $q = \sum_{i}k_i - \sum_{i} l_i$. Then if $q\geq0$, $L \in M^q \mathcal T$. Indeed, note that $MD = T_0$ and $DM = T_1$. Hence, in the product $M^{k_1} D^{l_1} \ldots M^{k_n} D^{l_n}$, one can pair any product $MD$ (or $DM$), replace it by $T_0$ (or $T_1$), and flush it right using \eqref{eq:recT}. Hence the result.

  Now we prove the Lemma. If $X$ satisfies Assumption~3, then a Stein operator for $X$ is $A=P(MD) - M^k Q(MD)$, where $P$ and $Q$ are polynomials. Using repeatedly the fact that $DM = MD+I$, one sees that for any integer $n \in \N$, $(MD)^n$ can be expanded in a sum of terms of the type $a_l M^lD^l$, $l \in \N$, $a_l \in \R$. The same holds for $Q(MD)$. Hence $A = \sum_{i, j}a_{ij}M^jD^i$ with $\left\{ j-i\, | \, a_{ij}\neq0 \right\} = \{0,k\}$.
  
   Let us prove the converse. We only treat the case  $\# \left\{ j-i \, | \, a_{ij}\neq0 \right\}=2$, the others being similar.
  Assume $A = \sum_{i, j}a_{ij}M^jD^i$ such that $\# \left\{ j-i  \, | \, a_{ij}\neq0 \right\} = \{k_1, k_2\}$ with $k_1>k_2$, is a Stein operator for $X$. Assume first that $k_2 \geq 0$. Then from the above, $M^j D^i \in M^{j-i} \mathcal T$, the latter set being either $M^{k_1} \mathcal T$ or $M^{k_2}\mathcal T$, depending on the value of $j-i$. But from \eqref{eq:commT}, $M^{k_1} \mathcal T = M^{k_1-k_2}\mathcal T M^{k_2}$ and $M^{k_2}\mathcal T =\mathcal T M^{k_2} $. Hence $A$ can be rewritten $L M^{k_2} - M^{k_1-k_2}K M^{k_2}$, with $L, K \in \mathcal T$. Simplifying by $M^{k_2}$ on the right (i.e., applying $A$ to $x \mapsto f(x) x^{-k_2}$ instead of $f$) yields the result.
  
  If $k_2<0$, one can multiply $A$ by $M^{-k_2}$ on the right and proceed in the same manner.
  \end{proof}

\begin{rmk}
  Assume $X$ admits a smooth density $p$, which solves the
  differential equation $B p = 0$ with
  $B = \sum_{i,j} b_{ij} M^j D^i$. Then, by duality (i.e.\ integration
  by parts; see Section \ref{sec:form-dens-char} for further detail),
  a Stein operator for $X$ is given by
  $A = \sum_{i,j} (-1)^i b_{ij} D^i M^j$. Then, in a similar manner as
  in the previous lemma, one can prove that $X$ satisfies Assumption~3
  if, and only if, $\# \left\{ j-i\, | \, b_{ij}\neq0 \right\}\le
  2$. In other words, the condition given in  Lemma~\ref{lma:building-blocks} for $X$ to
  satisfy Assumption~3 can be equivalently checked on the Stein
  operator $A$ or on the differential operator $B$ which cancels out
  the density of $X$.
\end{rmk}

One can specialize the result of Lemma \ref{lma:building-blocks} when the score of the distribution of $X$ is a rational fraction, which includes a wide class of classical distributions.  In this paper, $X$ will be a continuous random variable, and we use the terminology score function of $X$ to mean the logarithmic derivative of its probability density function.

\begin{cor}
Assume $X$ admits a score function of the form
\eq
\label{eq:score}
\rho(x):= \frac{p'(x)}{p(x)} = \frac{a \,x^k+b \,x^l}{c \,x^{k+1}+d \,x^{l+1}},
\qe
with $k, l \in \N$. Then $X$ satisfies Assumption~3.

Conversely,  if the score $\rho$ of $X$ is a rational fraction, and if
$X$ satisfies Assumption~3, then $\rho$ is of the form
\eqref{eq:score}. 
\end{cor}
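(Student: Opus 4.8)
The plan is to reduce the Corollary to Lemma~\ref{lma:building-blocks} by passing from the score function to the differential operator annihilating the density, exactly as indicated in the Remark following that Lemma. First I would observe that if $\rho(x) = p'(x)/p(x) = \frac{a x^k + b x^l}{c x^{k+1} + d x^{l+1}}$, then $p$ solves the first order linear ODE $(c x^{k+1} + d x^{l+1}) p'(x) - (a x^k + b x^l) p(x) = 0$, i.e. $Bp = 0$ with $B = (c M^{k+1} + d M^{l+1})D - (a M^k + b M^l)I$. Expanding this, $B = \sum_{i,j} b_{ij} M^j D^i$ where the nonzero terms are $M^{k+1}D$, $M^{l+1}D$ (both with $j - i = k$ and $j - i = l$ respectively) and $M^k$, $M^l$ (again with $j - i = k$ and $j - i = l$). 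Hence $\{\, j - i \mid b_{ij} \neq 0 \,\} \subseteq \{k, l\}$, which has cardinality at most $2$. By the Remark, this is exactly the condition for $X$ to satisfy Assumption~1, so the forward direction follows.

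For the converse, I would start from the hypothesis that $\rho$ is a rational fraction and that $X$ satisfies Assumption~1. Writing $\rho = P/Q$ in lowest terms with $P, Q$ polynomials, the density solves $Q(M)D\, p - P(M)\, p = 0$, so $B = Q(M)D - P(M)I = \sum_{i,j} b_{ij}M^jD^i$ is a differential operator annihilating $p$; here the $D^1$-terms come from $Q(M)D$ and the $D^0$-terms from $P(M)I$, so $\{\, j-i \mid b_{ij}\neq 0\,\} = (\deg\text{-support of }Q \text{ shifted by }-1 \text{ from }D) \cup (\deg\text{-support of }P)$. By the Remark (the equivalence of checking Assumption~1 on $B$ rather than on the Stein operator $A$), Assumption~1 forces $\#\{\, j-i \mid b_{ij}\neq 0\,\} \le 2$. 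One should be slightly careful: a priori $B$ as just written might not be the \emph{minimal} such operator, but since $P/Q$ is in lowest terms and $B$ has a unique factor of $D$, there is no nontrivial right factor of the form $M^m$ or cancellation to worry about, so the set $\{j-i : b_{ij}\neq 0\}$ is genuinely what it appears to be. Having $\le 2$ distinct values of $j - i$ among the monomials $M^{\deg Q + \text{shift}}D$ and $M^{\deg P}$ then translates back, after clearing, to $Q$ having at most two monomials and $P$ having at most two monomials with matching exponent offsets — precisely the shape in \eqref{eq:score}.

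Concretely, the translation in the last step goes as follows: suppose $Q(x) = \sum_s c_s x^s$ and $P(x) = \sum_t a_t x^t$. The $D^1$ monomials of $B$ are $c_s M^s D$, contributing offsets $j - i = s - 1$; the $D^0$ monomials are $-a_t M^t$, contributing offsets $j - i = t$. The union of $\{s - 1 : c_s \neq 0\}$ and $\{t : a_t \neq 0\}$ must have at most two elements. After multiplying the ODE through by a power of $x$ to absorb negative offsets if needed (which does not change the score), one concludes that $Q$ is supported on at most two powers $x^{k+1}, x^{l+1}$ and $P$ on at most two powers $x^k, x^l$ with the offsets aligned, giving $\rho(x) = \frac{a x^k + b x^l}{c x^{k+1} + d x^{l+1}}$ as claimed.

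The main obstacle, and the only place requiring genuine care rather than bookkeeping, is the converse's subtlety about minimality: one must argue that one cannot ``hide'' a score of a different shape inside a higher-order operator $B$ satisfying Assumption~1 — that is, that the equivalence in the Remark (Assumption~1 checkable on $B$) combined with $\rho$ being rational in lowest terms really does pin down the monomial support of $P$ and $Q$. This amounts to noting that the annihilating operator of a density with rational score is, up to left-multiplication by elements of $\mathcal T$ and powers of $M$, essentially unique of first order in $D$, so the offset-counting is unambiguous. Everything else is a direct application of Lemma~\ref{lma:building-blocks} and its Remark.
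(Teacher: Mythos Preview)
Your forward direction is correct and is essentially the paper's argument run through the dual: the paper applies the canonical Stein operator to $Q(x)f(x)$ to obtain the Stein operator $A = D\,Q(M) + P(M)$ and then invokes Lemma~\ref{lma:building-blocks} directly, whereas you pass to the annihilator $B = Q(M)D - P(M)$ of the density and invoke the Remark. Since duality preserves the offset set $\{j-i : a_{ij}\neq 0\}$, the two routes are equivalent and your computation of the offsets as $\{k,l\}$ is right.

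The converse is where there is a genuine gap, and it is worth noting that the paper's own proof does not address it either --- the displayed argument only establishes the forward implication. Your attempt goes further, but the key step does not go through as written. The Remark (like Lemma~\ref{lma:building-blocks}) is an \emph{existence} statement: Assumption~1 holds if and only if \emph{some} polynomial annihilator $B'$ of $p$ has at most two offsets. It does not assert that \emph{every} annihilator, and in particular the specific first-order one $B_0 = Q(M)D - P(M)$, has at most two offsets. Your proposed fix --- that any annihilator is $B_0$ up to left-multiplication by elements of $\mathcal T$ and powers of $M$ --- is not correct: in the Weyl algebra one only gets $B' = C B_0$ with $C$ a differential operator with \emph{rational} (not merely monomial or $\mathcal T$-type) coefficients, and the offset set of a product $CB_0$ can be strictly smaller than that of $B_0$ because intermediate offsets may cancel (only the extreme offsets $\min$ and $\max$ are guaranteed to survive by a leading-term argument). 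So knowing that $C B_0$ has two offsets does not immediately force $B_0$ to have two offsets; one would need a separate argument controlling such cancellations, which neither you nor the paper supplies.
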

\begin{proof}
Assume that
$$\frac{p'(x)}{p(x)}=\frac{\sum_{i=0}^n a_i x^i}{\sum_{j=0}^mb_j x^j}.$$
Starting from the canonical Stein operator $f \mapsto f'+ \frac{p'}{p} f$, and applying it to $(\sum_{j=0}^mb_j x^j)f(x)$, we have that $A = D\big(\sum_{j=0}^n b_j M^j\big) + \sum_{i=0}^na_i M^i$ is a Stein operator for $X$. Since $DM^j = j M^{j-1} + M^j D$, an application of Lemma \ref{lma:building-blocks} yields the result.
\end{proof}

Let us now focus on the class of Pearson distributions, that is
  the collection of all continuous probability distributions which
  satisfy (\ref{eq:score}) with $k=1, l=0$. We give here a
generalization of \cite{stein2}, Theorem 1, p.\ 65 proving that
Pearson distributions have polynomial Stein operators which can be
written in terms of operators in $\mathcal T$.

\begin{lma}[\cite{stein2},Theorem 1, p.\ 65]\label{sec:pears-distr-1}
  Let $X$ be  of Pearson type with score function 
\begin{equation}
\label{eq:31}
\rho(x):= \frac{p'(x)}{p(x) } =- \frac{ax-\ell}{\delta_2x^2+\delta_1x+\delta_0}
\end{equation} 
for some $a, \ell, \delta_0, \delta_1, \delta_2$ and $x$ on the
resulting support.  If $\delta_0=0$
then
\begin{equation}
  \label{eq:34}
  A =  M \delta_2T_{2-a/\delta_2} + \delta_1 T_{1+\ell/\delta_1}
\end{equation}
and if $\delta_0\neq0$  then 
  \begin{equation}\label{eq:32}
    A =  M^2 \delta_2T_{3-a/\delta_2}+\delta_1 M
    T_{1+\ell/\delta_1} +
   \delta_0 T_{1}
  \end{equation}
is a Stein operator for $X$. 
\end{lma}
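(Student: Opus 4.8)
The plan is to re-run, for the Pearson family, the same manoeuvre used for the Beta example in Section~\ref{sec:classfunctions} and in the proof of the preceding corollary: start from the canonical operator $A_c f = f' + (p'/p)f$ and clear the denominator of the score. Recall that $\E[A_c f(X)] = [f(x)p(x)]_{\partial J}$, where $J$ is the support of $X$. Set $q(x) = \delta_2 x^2 + \delta_1 x + \delta_0$; hypothesis~\eqref{eq:31} says exactly that $q(x)\rho(x) = \ell - ax$. Hence, by the product rule,
\[
A_c(qf)(x) = q'(x)f(x) + q(x)f'(x) + \rho(x)q(x)f(x) = q(x)f'(x) + \big(q'(x) - ax + \ell\big)f(x),
\]
which has polynomial coefficients. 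When $\delta_0 \neq 0$ I would instead apply $A_c$ to $x \mapsto x q(x) f(x)$, i.e.\ to $M(qf)$; using $A_c M = I + M A_c$ this produces the polynomial-coefficient operator $x \mapsto x q(x) f'(x) + \big(x q'(x) + q(x) - a x^2 + \ell x\big) f(x)$.

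Next I would justify the boundary term. For $f \in \mathcal C_0^\infty(\R)$ the remainder $[q(x)p(x)f(x)]_{\partial J}$ (resp.\ $[x q(x)p(x)f(x)]_{\partial J}$) vanishes: at an infinite endpoint this is immediate from compact support, while at a finite endpoint the polynomial prefactor together with the decay of a Pearson density does the job --- this is precisely the mechanism discussed in Section~\ref{sec:classfunctions}, and it is also the reason one multiplies by the extra $x$ when $\delta_0 \neq 0$ and $q$ carries no factor of $x$ (the exponential being the prototype). This gives $\E[Af(X)] = 0$ for all $f \in \mathcal C_0^\infty(\R)$, so $A$ is a Stein operator in the sense of Definition~\ref{def:weaksteinop}.

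Finally I would rewrite $A$ through the $T_r$'s. Expanding $A$ in the basis $\{M^j D^i\}$ and then using $MD = T_0$, $DM = T_1$ and $DM^n = M^n D + n M^{n-1}$ to fold the monomials back into operators of the form $MD + rI$, the highest-order block fixes the first subscript as the indicated affine function of $a/\delta_2$, the next block produces the subscript involving $\ell/\delta_1$, and (when $\delta_0 \neq 0$) the constant block contributes the summand $\delta_0 T_1$; this gives \eqref{eq:34} and \eqref{eq:32}. The degenerate ranges $\delta_1 = 0$ or $\delta_2 = 0$ (the normal, Student's $t$, and so on) are read off from the rescaled limits $r^{-1} T_r \to I$ of Remark~\ref{rmk:infiniteparam}, applied respectively to $\delta_1 T_{1 + \ell/\delta_1}$ and $\delta_2 T_{2 - a/\delta_2}$.

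There is no conceptual obstacle in all this: it is duality plus bookkeeping. What needs care is, first, the collecting step of the last paragraph --- the affine reparametrisations and the limiting cases must be tracked without slips --- and, second, the boundary condition, whose clean formulation tacitly relies on the usual tail and endpoint behaviour of the Pearson family rather than establishing it member by member; a fully rigorous treatment would pin down the admissible class $\mathcal F$ (or the exact support) on a case-by-case basis, as the paper itself flags.
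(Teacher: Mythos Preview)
Your approach is essentially the paper's: apply the canonical operator to $qf$ (and to $xqf$ when $\delta_0\neq0$), use $q\rho=\ell-ax$ to obtain polynomial coefficients, then repackage in terms of the $T_r$'s. One small point: the paper's reason for the extra factor $x$ in the $\delta_0\neq0$ case is algebraic rather than boundary-related --- the raw operator then contains a bare $\delta_0 D$ term, and composing on the right with $M$ turns $\delta_0 D$ into $\delta_0 DM=\delta_0 T_1$, which is what yields \eqref{eq:32} via $T_rM=MT_{r+1}$; your boundary-vanishing rationale happens to be apt for the exponential but is not the general mechanism (the normal has $\delta_0\neq0$ yet no finite endpoints).
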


\begin{proof}
 If $X$ is Pearson with log-derivative \eqref{eq:31} then denoting
 $-P_{\mathrm{num}}/P_{\mathrm{denom}}$ this ratio we see that  
$\left( f  P_{\mathrm{denom}}p  \right)'/p  = f
(P_{\mathrm{denom}}'-P_{\mathrm{num}}) + f'P_{\mathrm{denom}}$, i.e.
\begin{equation*}
 f'(x) \left( \delta_2x^2 + \delta_1 x + \delta_0\right) 
+ f(x) \left(  (2 \delta_2 + a) x + (\delta_1 - \ell) \right). 
\end{equation*}
This operator is integrable with respect to $p$ (with integral 0) for
all $f \in \mathcal{F}$.  Conclusion \eqref{eq:34} follows
immediately, while \eqref{eq:32} is obtained after replacing $f$ with
$xf$ and using $T_rM = M T_{r+1}$.
\end{proof}

\begin{exm} \label{ex:building-blocks}
The normal distribution $N(\mu, \sigma^2)$ falls into this class.  It has  log-derivative
  $\rho(x) = -(x-\mu)/\sigma^2$ on $\R$: $a=1$, $\ell=\mu$,
  $\delta_1 = \delta_2 = 0$ and $\delta_0 = \sigma^2$ and
  \eqref{eq:32} applies leading to $A = \sigma^2T_1 + \mu M - M^2$
  (recall that $r T_{1/r} \to I$ as $r\to 0$).  Assumption~3 is satisfied if and only if $\mu=0$; in a future work \cite{gms18} we shall introduce a technique for obtaining Stein operators for products of a class of distributions, which includes the non centered normal distribution, that do not satisfy Assumption~3.   Other examples that are in the class of Lemma \ref{sec:pears-distr-1} include the gamma, beta, Student's $t$, and inverse-gamma distributions, all of which satisfy Assumption~3.  The resulting Stein operators (which are not new to the literature) are given in Appendix \ref{appendix:list}.
  \end{exm}

\subsection{The algebra for products of distributions}
\label{sec:prod-distr}

We first note how Proposition \ref{prop:1} is easily generalised to
the product of $n$ independent random variables, by induction. More precisely, if
$(X_i)_{1\leq i \leq n}$ are independent random variables with
respective Stein operator $L_i - M^p K_i$, if all the operators
$\{L_i, K_i\}_{1\leq i \leq n}$ commute with each other and with the
$\tau_a$, $a\in \R$, then a Stein operator for $\prod_{i=1}^n X_i$ is
$$\prod_{i=1}^{n}L_i - M^p \prod_{i=1}^n K_i.$$ 

The main drawback of Proposition \ref{prop:1} is that we assume the
same power of $M^p$ appears in both operators. As such, the
Proposition cannot be applied for instance for the product of a gamma
(for which $p=1$, see Appendix \ref{appendix:list}) and a centered
normal (for which $p=2$, see Appendix \ref{appendix:list}). In the
following Lemma and Proposition, we show how to bypass this
difficulty: one can build another Stein operator for $X$ with the
power $p$ multiplied by an arbitrary integer $k$ (even though by doing
so, one increases the order of the operator). Here we restrict
ourselves to the case where the $L_i$ and $K_i$ operators are products
of operators $T_\alpha$ and we make use of the relation \eqref{eq:recT}.

\begin{lma}
\label{lma:1}
Assume $X$ has a Stein operator of the form
\eq
\label{eq:axlma}
A_X = a\prod_{i=1}^n T_{\alpha_i}- b M^p \prod_{i=1}^m
T_{\beta_i}.  \qe Then, for every $k \geq 1$, 
$$a^k \prod_{i=1}^n \prod_{j=0}^{k-1} T_{\alpha_i+jp} - b^k M^{kp}
\prod_{i=1}^m \prod_{j=0}^{k-1} T_{\beta_i+jp}$$
is a Stein operator for $X$.
\end{lma}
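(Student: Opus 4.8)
The plan is to argue by induction on $k$, the case $k=1$ being nothing but the hypothesis that $A_X$ in \eqref{eq:axlma} is a Stein operator for $X$. I take $\mathcal F=\mathcal C_0^\infty(\R)$ throughout (any class stable under $M$ and $D$ would serve equally well); the point of this choice is that whenever $f\in\mathcal F$ and $B$ is a polynomial in $M$ and $D$, one still has $Bf\in\mathcal F$, which is what permits substituting modified test functions into the Stein identity $\E[A_X g(X)]=0$.

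Two elementary algebraic facts drive the argument. First, the operators $T_r$ all commute with one another, hence any two products of $T_r$'s commute. Second, the shift relation \eqref{eq:recT} implies that for $L\in\mathcal T$ written as $c\prod_i T_{\gamma_i}$ one has $LM^p=M^pL^{+p}$, where $L^{+p}:=c\prod_iT_{\gamma_i+p}$ denotes the same product with every index raised by $p$. Set $L:=a\prod_{i=1}^nT_{\alpha_i}$ and $K:=b\prod_{i=1}^mT_{\beta_i}$, and let $L^{[k]}$, $K^{[k]}$ be the operators appearing in the statement, so that $A_X=L-M^pK=L^{[1]}-M^pK^{[1]}$. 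I will prove by induction that
\[
  \E\left[L^{[k]}f(X)\right]=\E\left[M^{kp}K^{[k]}f(X)\right],\qquad f\in\mathcal F,
\]
which is exactly the assertion that $L^{[k]}-M^{kp}K^{[k]}$ is a Stein operator for $X$.

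For the inductive step from $k$ to $k+1$, observe first that, regrouping factors by means of the commutativity of the $T_r$'s, $L^{[k+1]}=L\,L^{[k],+p}$ and $K^{[k+1]}=K^{[k],+p}\,K$, where $L^{[k],+p}$ and $K^{[k],+p}$ are the index-shifted versions of $L^{[k]}$ and $K^{[k]}$; and by the second fact above, $L^{[k]}M^p=M^pL^{[k],+p}$ and $K^{[k]}M^p=M^pK^{[k],+p}$. Now fix $f\in\mathcal F$. Applying the base identity $\E[Lg(X)]=\E[M^pKg(X)]$ to $g:=L^{[k],+p}f\in\mathcal F$ gives $\E[L^{[k+1]}f(X)]=\E[M^pKL^{[k],+p}f(X)]$; commuting $K$ past $L^{[k],+p}$ and then using $M^pL^{[k],+p}=L^{[k]}M^p$ rewrites the right-hand side as $\E[L^{[k]}(M^pKf)(X)]$. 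Applying the induction hypothesis to the function $M^pKf\in\mathcal F$ turns this into $\E[M^{kp}K^{[k]}M^pKf(X)]$, and finally pushing the inner $M^p$ leftwards through $K^{[k]}$ (second fact once more) and recombining $K^{[k],+p}K=K^{[k+1]}$ produces $\E[M^{(k+1)p}K^{[k+1]}f(X)]$, which closes the induction.

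All the individual steps are routine; the one place calling for care — and the only part a reader should dwell on — is the bookkeeping of the index shifts forced by \eqref{eq:recT}, namely keeping track of which products of $T_r$'s have their subscripts bumped by $p$ each time an $M^p$ is transported across them, together with the check at each stage that the modified test functions remain in $\mathcal F$ so that the Stein identity may legitimately be invoked. Note also that, in contrast with Proposition~\ref{prop:1}, no independence or conditioning enters here: the mechanism is simply to iterate the single Stein identity attached to $X$ itself.
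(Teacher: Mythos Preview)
Your proof is correct and follows essentially the same approach as the paper's: both argue by induction on $k$, driving the step with the commutativity of the $T_r$'s together with the shift relation $T_rM^p=M^pT_{r+p}$ from \eqref{eq:recT}. The only cosmetic differences are that you peel off the bottom layer $L$ from $L^{[k+1]}$ and apply the base identity before the induction hypothesis, whereas the paper peels off the top layer $\prod_i T_{\alpha_i+kp}$ and applies them in the opposite order; your $L^{[k]},K^{[k]},(\cdot)^{+p}$ notation makes the bookkeeping somewhat cleaner than the paper's explicit products.
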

\begin{proof}
  We prove the result by induction on $ k$. By assumption, it is true
  for $k=1$. Then, using the recurrence hypothesis and
  \eqref{eq:recT},
\begin{align*}
\E\bigg[ a^{k+1} \prod_{j=0}^k\prod_{i=1}^n T_{\alpha_i + j k} f(X) \bigg]& = \E\bigg[ a a^{k}  \prod_{j=0}^{k-1}\prod_{i=1}^n T_{\alpha_i + j p}\bigg( \prod_{i=1}^n T_{\alpha_i+kp} f\bigg)(X) \bigg]\\
& = \E\bigg[ a b^{k} M^{kp}\prod_{j=0}^{k-1}\prod_{i=1}^m T_{\beta_i + j p}\bigg( \prod_{i=1}^n T_{\alpha_i+kp} f\bigg)(X)\bigg]\\
& = \E\bigg[ a b^{k} \prod_{i=1}^n T_{\alpha_i}M^{kp}\prod_{j=0}^{k-1}\prod_{i=1}^m T_{\beta_i + j p} f(X)\bigg]
\end{align*}
\begin{align*}
& = \E\bigg[ b^{k+1} M^p\prod_{i=1}^mT_{\beta_i}M^{kp}\prod_{j=0}^{k-1}\prod_{i=1}^m T_{\beta_i + j p} f(X)\bigg]\\
& = \E\bigg[ b^{k+1} M^{kp}\prod_{j=0}^{k}\prod_{i=1}^m T_{\beta_i + j p} f(X)\bigg],
\end{align*}
which proves our claim.
\end{proof}

Lemma \ref{lma:1} leads to the following rule of thumb for the
problem of finding a Stein operator for a product of independent
random variables $X$ and $Y$ with Stein operators
$A_X = a\prod_{i=1}^n T_{\alpha_i}- b M^p \prod_{i=1}^m
T_{\beta_i}$ and
$A_Y = a'\prod_{i=1}^{n'} T_{\alpha'_i}- b' M^{p'}
\prod_{i=1}^{m'} T_{\beta'_i}$, with $p \neq p'$. Apply Lemma
\ref{lma:1} to $X$ with $k = p'$ and to $Y$ with $k = p$ to get Stein
operators for $X$ and $Y$ of the form of Proposition \ref{prop:1}, but
with $p$ replaced by $pp'$. Then apply the Proposition. As an illustration, one can prove the following.

\begin{prop}
\label{prop:2}
Assume $X,Y$ are random variables with respective Stein operators
\begin{eqnarray*}
A_X &=& a_1 T_{\alpha_1} - a_2 M^p T_{\alpha_2},\\
A_Y &=& b_1 T_{\beta_1} - b_2 M^q T_{\beta_2}, 
\end{eqnarray*}
where $p,q\in \N$ and
$\alpha_1,\alpha_2,\beta_1,\beta_2 \in \R\cup\{\infty\}$ (and Remark
\ref{rmk:infiniteparam} applies in case  any of the $\alpha_i$ or
$\beta_i$, $i=1, 2$ is set
to $+\infty$). Let $m$ be
the least common multiple of $p$ and $q$ and write $m = k_1p = k_2
q$.
Then, if $X$ and $Y$ are independent, 
\begin{equation*} a_1^{k_1} b_1^{k_2}
\prod_{i=0}^{k_1-1}T_{\alpha_1+ip}\prod_{i=0}^{k_2-1}T_{\beta_1+i q} -
M^m a_2^{k_1}b_2^{k_2}
\prod_{i=0}^{k_1-1}T_{\alpha_2+ip}\prod_{i=0}^{k_2-1}T_{\beta_2+i q}
\end{equation*}
is a Stein operator for $XY$. 
\end{prop}
\begin{proof}
Apply Lemma \ref{lma:1} with $k_1$ and $k_2$ to get, for all $f \in \mF$,
$$\E\bigg[ a_1^{k_1} \prod_{j=0}^{k_1-1}T_{\alpha_1+jp} f(X) \bigg] = \E\bigg[ a_2^{k_1} M^{m} \prod_{j=0}^{k_1-1}T_{\alpha_2+jp}f(X) \bigg],$$
and
$$\E\bigg[ b_1^{k_2} \prod_{j=0}^{k_2-1}T_{\beta_1+jp} f(Y) \bigg] = \E\bigg[ b_2^{k_2} M^{m} \prod_{j=0}^{k_2-1}T_{\beta_2+jp}f(Y) \bigg].$$
Then the proof follows from an application of Proposition \ref{prop:1}.
% with
%\begin{align*}
%L_X &= a_1^{k_1} T_{\alpha_1}\ldots T_{\alpha_1+(k_1-1) p}\\
%K_X &= a_2^{k_1} T_{\alpha_2}\ldots T_{\alpha_2+(k_1-1)p}\\
%L_Y &= b_1^{k_2} T_{\beta_1}\ldots T_{\beta_1+(k_2-1) q}\\
%K_Y &= b_2^{k_2} T_{\beta_2}\ldots T_{\beta_2+(k_2-1)q}.
%\end{align*}
\end{proof}

%We note that we can applying Proposition \ref{prop:2} when some of the
%$T_{\alpha}$'s and $T_{\beta}$'s are replaced by the identity operator
%$I$.  This can be seen because $cT_{1/c}=cMD+I$.  Taking the limit
%gives that $I=\lim_{c\rightarrow0}T_{1/c}$.  This observation yields,
%as an example, the following corollary.
%
%\begin{cor}Assume $X,Y$ are r.v. with respective Stein operators
%\begin{eqnarray}
%A_X =& a_1 T_{\alpha} - a_2 M^p \label{eq:4}\\
%A_Y =& b_1 T_{\beta} - b_2 M^q, \label{eq:3}
%\end{eqnarray}
%where $p,q\in \N$ and $\alpha,\beta \in \R$. Let $m$ be the least
%common multiple of $p$ and $q$ and write $m = k_1p = k_2 q$. If $X$
%and $Y$ are independent, then \eq a_1^{k_1} b_1^{k_2}
%\prod_{i=0}^{k_1-1}T_{\alpha+ip}\prod_{i=0}^{k_2-1}T_{\beta+i q} -
%a_2^{k_1}b_2^{k_2} M^m.  \qe is a (weak) Stein operator for $XY$.
%\end{cor}

%We note that we can applying Proposition \ref{prop:2} when some of the $T_{\alpha}$'s and $T_{\beta}$'s are replaced by the identity operator $I$.  This can be seen because $cT_{1/c}=cMD+I$.  Taking the limit gives that $I=\lim_{c\rightarrow0}T_{1/c}$.  Applying Proposition \ref{prop:2} in this limiting form this allows us to treat cases in which some of the $T_{\alpha}$'s and $T_{\beta}$'s are replaced by the identity operator $I$.

\begin{rmk}
Let us give an example when one of the $T$ operators is the identity. We have that if $X,Y$ are random variables with respective Stein operators
\begin{eqnarray*}
A_X &=& a_1 T_{\alpha} - a_2 M^p, \\
A_Y &=& b_1 T_{\beta} - b_2 M^q, 
\end{eqnarray*}
then, with the same notation, a Stein operator for $XY$ is
 \[ a_1^{k_1} b_1^{k_2}
\prod_{i=0}^{k_1-1}T_{\alpha+ip}\prod_{i=0}^{k_2-1}T_{\beta+i q} -
a_2^{k_1}b_2^{k_2} M^m.  \]
\end{rmk}

\subsection{The algebra for powers and inverse distributions}
\label{sec:powers-inverse-distr}

In this section, we consider (not necessarily integer nor positive)
powers of $X$'s and this implies we shall need to modify Assumption 2
from Section \ref{sec:classfunctions} to ensure that all operators are
well defined. If $X$ takes values a.s.\ in $\R \backslash \{0\}$, then
we restrict to test functions
$f \in C_0^{\infty}(\R\backslash \{0\})$. If $X$ takes values a.s.\ in
$(0,\infty)$, then we further restrict to test functions
$f \in C_0^{\infty}((0, \infty))$.  Likewise, we extend some notations
from Section \ref{sec:notations}. We extend the definition of $M^a$ to
$a \in \mathbb Z$ by $M^a f (x) = x^a f(x)$, $x \neq 0$. Further we
extend this definition to $a\in \R$ for $x \in (0, \infty)$.

Let us first note a result concerning powers.  
Let $P_a$ be defined by
$P_af(x)=f(x^a)$.  For $a\not=0$, we have that $T_rP_a=aP_aT_{r/a}$, since
\begin{align*}T_rP_af(x) &=x\cdot ax^{a-1}f'(x^a)+rf(x^a)=ax^af'(x^a)+rf(x^a)\\
& =a(x^af'(x^a)+(r/a)f(x^a))=aP_aT_{r/a}f(x).
\end{align*}
This result allows us to easily obtain Stein operators for
powers of random variables and inverse distributions.  Suppose $X$ has
Stein operator
\begin{equation*}A_X=aT_{\alpha_1}\cdots T_{\alpha_n}-bM^qT_{\beta_1}\cdots T_{\beta_m}.
\end{equation*} 
We can write down a Stein operator for $X^\gamma$ immediately (if $X$ takes negative values, we restrict to positive or integer-valued $\gamma$):
\begin{align}A_{X^\gamma}&=aT_{\alpha_1}\cdots T_{\alpha_n}P_\gamma-bM^qT_{\beta_1}\cdots T_{\beta_m}P_\gamma \nonumber \\
&=a\gamma^nP_\gamma T_{\alpha_1/\gamma}\cdots T_{\alpha_n/\gamma}-b\gamma^mM^qP_\gamma T_{\beta_1/\gamma}\cdots T_{\beta_m/\gamma} \nonumber \\
\label{powereqn1}&=a\gamma^nP_\gamma T_{\alpha_1/\gamma}\cdots T_{\alpha_n/\gamma}-b\gamma^mP_\gamma M^{q/\gamma} T_{\beta_1/\gamma}\cdots T_{\beta_m/\gamma}.
\end{align}
Applying $P_{1/\gamma}$ on the left of (\ref{powereqn1}) gives the following Stein operator for the random variable $X^\gamma$:
\begin{equation}\label{powereqn2}\tilde{A}_{X^\gamma}=a\gamma^{n} T_{\alpha_1/\gamma}\cdots T_{\alpha_n/\gamma}-b\gamma^m M^{q/\gamma} T_{\beta_1/\gamma}\cdots T_{\beta_m/\gamma},
\end{equation}
as $P_{1/\gamma}P_{\gamma}=I$.

From (\ref{powereqn2}) we immediately obtain, for example, the classical $\chi_{(1)}^2$ Stein operator $T_{1/2}-\frac{1}{2}M$ from the standard normal Stein operator $T_1-M^2$.  However, in certain situations, a more convenient form of the Stein operator may be desired.  To illustrate this, we consider the important special case of inverse distributions.  Here $\gamma=-1$, which yields the following Stein operator for $1/X$:
\begin{equation*}a(-1)^n T_{-\alpha_1}\cdots T_{-\alpha_n}-b(-1)^m M^{-q} T_{-\beta_1}\cdots T_{-\beta_m}.
\end{equation*}
To remove the singularity, we multiply on the right by $M^{-1}$ to get
\begin{align}A_{1/X}&=a(-1)^n T_{-\alpha_1}\cdots T_{-\alpha_n}M^q-b(-1)^m M^{-q} T_{-\beta_1}\cdots T_{-\beta_m}M^q \nonumber \\
&=a(-1)^n M^qT_{q-\alpha_1}\cdots T_{q-\alpha_n}-b(-1)^m  T_{q-\beta_1}\cdots T_{q-\beta_m}\nonumber.
\end{align}
Cancelling constants gives the Stein operator
\begin{align}\label{inveqn1}\tilde{A}_{1/X}=b T_{q-\beta_1}\cdots T_{q-\beta_m}-(-1)^{m+n}a M^qT_{q-\alpha_1}\cdots T_{q-\alpha_n} .
\end{align}

\section{Applying the algebra to find new Stein operators}
\label{sec:applications-1}
Starting from the classical Stein operators of the centered normal, gamma, beta, Student's $t$, inverse-gamma, PRR, variance-gamma (with $\theta=0$ and $\mu=0$), and generalized gamma distributions, we use the results of Section \ref{sec:prod-distr} to derive new operators for the (possibly mixed) products of these distributions. The operators of the aforementioned distributions are summed up in Appendix \ref{appendix:list}. Stein operators for any mixed product of independent copies of such random variables are attainable through a direct application of Proposition \ref{prop:2}.  We give some examples below. 
%We derive new Stein operators for product and quotient distributions and show that a number of Stein operators from the existing literature can be derived easily using the algebra of Stein operators developed in Sections \ref{sec:prod-distr} and \ref{sec:powers-inverse-distr}.  It is indeed interesting to note just how many of the Stein operators from the existing literature can be derived from a simple application of our algebra of Stein operators to just a small number of standard and easy to obtain Stein operators. 

%Stein operators for the considered distributions are available in the literature (references are given in Appendix \ref{appendix:list}). By using similar techniques as the one explained in the normal case, one can build new ones that are of the form needed to apply our Propositions. , the operators we obtain. The distribution of each considered random variable is one of the distributions of Table \ref{tab:2} of Appendix \ref{appendix:list}.

\subsection{Mixed products of centered normal and gamma random variables}

\label{subsec:1}

Stein operators for (mixed) products of independent central normal, beta and gamma random variables were obtained by \cite{gaunt pn, gaunt ngb}.  Here we demonstrate how these Stein operators can be easily derived by an application of our theory (we omit the beta distribution for reasons of brevity).  Let $(X_i)_{1\leq i \leq n}$ and $(Y_j)_{1\leq j \leq m}$ be independent random variables and assume $X_i \sim \mathcal N(0,\sigma^2)$ and $Y_j \sim \Gamma(r_j,\lambda_j)$.  The random variables $X_i$ and $Y_j$ admit the following Stein operators:
\begin{eqnarray}\label{sec61}A_{X_i}&=&\sigma_i^2T_1-M^2,\\
\label{sec62}A_{Y_j}&=&T_{r_j}-\lambda_jM.
\end{eqnarray}
A repeated application of Proposition \ref{prop:2} now gives the following Stein operators:
\begin{eqnarray}\label{ngp1}A_{X_1\cdots X_n}&=&\sigma_1^2\cdots\sigma_n^2T_1^n-M^2, \\
\label{ngp2}A_{Y_1\cdots Y_m}&=&T_{r_1}\cdots T_{r_m}-\lambda_1\cdots \lambda_m M, \\
\label{ngp3}A_{X_1\cdots X_nY_1\cdots Y_m}&=&\sigma_1^2\cdots\sigma_n^2T_1^nT_{r_1}\cdots T_{r_m}T_{r_1+1}\cdots T_{r_m+1}-\lambda_1\cdots \lambda_m M^2.
\end{eqnarray}
The product gamma Stein operator (\ref{ngp2}) is in exact agreement with the one obtained by \cite{gaunt ngb}.  However, the Stein operators (\ref{ngp1}) and (\ref{ngp3}) differ slightly from those of \cite{gaunt pn, gaunt ngb}, because they act on different functions.  Indeed, the product normal Stein operator given in \cite{gaunt pn} is $\tilde{A}_{X_1\cdots X_n}=\sigma_1^2\cdots\sigma_n^2DT_0^n-M$, but multiplying through on the right by $M$ yields (\ref{ngp1}).  The same is true of the mixed product operator (\ref{ngp3}), which is equivalent to the mixed normal-gamma Stein operator of \cite{gaunt ngb} multiplied on the right by $M$.  We refer to Appendix \ref{appendix:list} where this idea is expounded.

Finally, we note that whilst the operators (\ref{ngp1}) and (\ref{ngp2}) are of orders $n$ and $m$, respectively, the mixed product operator (\ref{ngp3}) is of order $n+2m$, rather than order $n+m$ which one may at first expect.  This a consequence of the fact that the powers of $M$ in the Stein operator (\ref{sec61}) and (\ref{sec62}) differ by a factor of 2.

\subsection{Mixed product of Student and variance-gamma random variables}

\label{sec:product-1}

Let $(X_i)_{1\leq i \leq n}$ and $(Y_j)_{1\leq j \leq m}$ be independent random variables and assume $X_i \sim t_{\nu_i}$ follows Student's $t$-distribution with $\nu$ degrees of freedom and $Y_j \sim \mathrm{VG}(r_j,0,\sigma_j,0)$; the p.d.f$.$s of these distributions are given in Appendix \ref{appendix:list}. $X_i$ and $Y_j$ admit Stein operators of the form:
\begin{eqnarray}
A_{X_i} &=& \nu_i T_1 + M^2 T_{2-\nu_i},\nonumber\\
\label{vgsteingms}A_{Y_j} &=& \sigma_j^2T_1T_{r_j}-M^2.
\end{eqnarray}
%The operator $A_{Y_j}$ is obtained by multiplying on the right by $M$ the usual $\mathrm{VG}(r_j,0,\sigma_j,0)$ Stein operator $\tilde{A}_{Y_j} = \sigma_j^2T_{r_j}D-M$.  
Note that one cannot apply Proposition \ref{prop:1} to the
$\mathrm{VG}(r,\theta,\sigma,0)$ Stein operator
$\sigma^2T_1T_{r}+2\theta MT_{r/2}-M^2$, because Assumption~3 is not satisfied. 

Applying recursively Proposition \ref{prop:1}, we obtain the following Stein operators:
\begin{eqnarray}A_{X_1\cdots X_n}&=&\nu_1 \ldots \nu_n  T_1^{n} - (-1)^{n} M^2 T_{2-\nu_1} \ldots T_{2-\nu_n},\nonumber \\
\label{ngp21}A_{Y_1\cdots Y_m}&=& \sigma_1^2 \ldots \sigma_m^2 T_1^{m} T_{r_1} \ldots T_{r_m} -  M^2, \\
A_{X_1\cdots X_nY_1\cdots Y_m}&=& \nu_1 \ldots \nu_n \sigma_1^2 \ldots \sigma_m^2 T_1^{n+m} T_{r_1} \ldots T_{r_m} - (-1)^{n} M^2 T_{2-\nu_1} \ldots T_{2-\nu_n}.\nonumber
\end{eqnarray}

As an aside, note that (\ref{vgsteingms}) can be obtained by applying Proposition \ref{prop:1} to the Stein operators
\begin{equation*}A_X=\sigma^2T_1-M^2, \quad A_Y=T_r-M^2,
\end{equation*}
where $X$ and $Y$ are independent.  We can identify $A_X$ as the Stein operator for a $\mathcal{N}(0,\sigma^2)$ random variable and $A_Y$ as the Stein operator of the random variable $Y=\sqrt{V}$ where $V\sim \Gamma(r/2,1/2)$.  Since the variance-gamma Stein operator is characterizing (see \cite{gaunt vg}, Lemma 3.1), it follows that that $Z\sim \mathrm{VG}(r,0,\sigma,0)$ is equal in distribution to $X\sqrt{V}$.  This representation of the $ \mathrm{VG}(r,0,\sigma,0)$ distribution can be found in \cite{barn}.  This example demonstrates that by characterizing probability distributions, Stein operators can be used to derive useful properties of probability distributions; for a further discussion on this general matter see Section \ref{sec:form-dens-char}.

The Stein operator (\ref{ngp21}) will be used in Section \ref{sec:form-dens-char} in a derivation of the formula for the p.d.f$.$ of the product of independent $\mathrm{VG}(r,0,\sigma,0)$ random variables.  As an example, following some straightforward calculations, we write down explicitly the Stein operator for the case $m=2$ and $\sigma_1=\sigma_2=1$:
%We note that
%\begin{align*}T_rT_sf(x)&=x^2f''(x)+(1+r+s)xf'(x)+rsf(x), \\
%T_rT_sT_tf(x)&=x^3f^{(3)}(x)+(3+r+s+t)x^2f''(x)+(1+r+s+t+1+r+s+t)xf'(x)\\
%&\quad+rstf(x), \\
%T_rT_sT_tT_uf(x)&=xf^{(4)}(x)+(6+r+s+t+u)x^3f^{(3)}(x)+(10+3(r+s+t+u)\\
%&\quad+rs+rt+ru+st+su+tu)x^2f''(x)+(1+r+s+t+u+rs+rt+ru\\
%&\quad+st+su+tu+rst+stu+rtu+rsu)xf'(x)+rstuf(x),
%\end{align*}
%and therefore
\begin{align*}A_{Y_1Y_2}f(x)&=x^4f^{(4)}(x)+(8+r_1+r_2)x^3f^{(3)}(x)+(17+5r_1+5r_2+r_1r_2)x^2f''(x)\\
&\quad+(4+4r_1+4r_2+3r_1r_2)xf'(x)+(r_1r_2-x^2)f(x).
\end{align*}

\subsection{PRR distribution}
\label{sec:prr-distribution}
A Stein operator for the PRR distribution is given by
\begin{equation}\label{prrop}A_{s}f(x)=sT_1T_2f(x)-M^2T_{2s}f(x)=sx^2f''(x)+(4sx-x^3)f'(x)+2s(1-x^2)f(x),
\end{equation}
see Appendix \ref{appendix:list}. We now exhibit a neat derivation of this Stein operator by an application of Sections \ref{sec:prod-distr} and \ref{sec:powers-inverse-distr}.  Let $X$ and $Y$ be independent random variables with distributions
\begin{equation*}X\sim  \begin{cases} \mathrm{Beta}(1,s-1), & \quad \text{if $s>1$,} \\
\mathrm{Beta}(1/2,s-1/2), & \quad  \text{if $1/2<s\leq 1$,} \end{cases}
\end{equation*}
and
\begin{equation*}Y\sim  \begin{cases} \Gamma(1/2,1), & \quad \text{if $s>1$,} \\
\mathrm{Exp}(1), & \quad  \text{if $1/2<s\leq 1$.} \end{cases}
\end{equation*}
Then it is known that $\sqrt{2sXY}\sim K_s$ (see \cite{pekoz}, Proposition 2.3).

If $s>1$, then we have the following Stein operators for $X$ and $Y$:
\begin{equation*}A_X=T_1-MT_s, \quad A_Y=T_{1/2}-M,
\end{equation*}
and, for $1/2<s\leq1$,  we have the following Stein operators for $X$ and $Y$:
\begin{equation*}A_X=T_{1/2}-MT_s, \quad A_Y=T_{1}-M.
\end{equation*}
Using Proposition \ref{prop:2}, we have that, for all $s>1/2$,
\begin{equation*}A_{XY}=T_{1/2}T_1-MT_s.
\end{equation*}
From (\ref{powereqn2}) we obtain the Stein operator
\begin{equation*}A_{\sqrt{XY}}=T_1T_2-2M^2T_{2s},
\end{equation*}
which on rescaling by a factor of $\sqrt{2s}$ yields the operator (\ref{prrop}).

%\subsubsection{Product of variance-gamma $(r,0,\sigma,0)$ random variables}

%\label{sec:prod-vari-gamma}
%Again the proposition applies. A Stein operator for this type of r.v. is
%$$\sigma^2 M D^2 + \sigma^2 r D - M,$$
%multiplying by $M$ on the right, another form is
%\begin{align}
%&\sigma^2 M D^2 M + \sigma^2 r D M - M^2\nonumber \\
%=&\sigma^2 MD(MD+I) + \sigma^2 r (MD+I) - M^2\nonumber\\
%=& \sigma^2 (MD+I)(MD+rI)-M^2\nonumber\\
%\label{prodvgstein}=&\sigma^2 T_1 T_r - M^2
%\end{align}
%so that \eqref{eq:1} holds with $L_1 = T_1 T_r$, $L_2 = I$ and $p=2$. Thus we have the following Stein operator for a product of $n$ Variance-Gammas with parameters $(r_i,0,\sigma_i,0)$:
%$$\sigma_1^2\ldots\sigma_n^2 T_1^n T_{r_1}\ldots T_{r_n} - M^2. $$

\subsection{Inverse and quotient distributions}

From (\ref{inveqn1}) we can write down inverse distributions for many standard distributions.  First, suppose $X\sim \mathrm{Beta}(a,b)$.  Then a Stein operator for $1/X$ is
\begin{equation}\label{betainv}A_{1/X}=T_{1-a-b}-MT_{1-a}.
\end{equation}
Now, let $X_1\sim\mathrm{Beta}(a_1,b_1)$ and $X_2\sim\mathrm{Beta}(a_2,b_2)$ be independent.  Then using Proposition \ref{prop:2} applied to the Stein operator (\ref{betainv}) for $1/X$ and the beta Stein operator, we have the following Stein operator for $Z=X_1/X_2$:
\begin{equation}\label{betainvstein}A_Z=T_{a_1}T_{1-a_2-b_2}-MT_{a_1+b_1}T_{1-a_2},
\end{equation}
which is a second order differential operator.

Let us consider the inverse-gamma distribution.  Let $X\sim\Gamma(r,\lambda)$, then the gamma Stein equation is $A_X=T_r-\lambda M$. From (\ref{inveqn1}) we can obtain a Stein operator for $1/X$ (an inverse-gamma random variable):
\begin{equation*}A_{1/X}=MT_{1-r}-\lambda I.
\end{equation*}
If $X_1\sim\Gamma(r_1,\lambda_1)$ and $X\sim\Gamma(r_2,\lambda_2)$, we have from the above operator and Proposition \ref{prop:2}, the following Stein operator for $Z=X_1/X_2$:
\begin{equation}\label{invgamma}A_Z=\lambda_1MT_{1-r_2}+\lambda_2T_{r_1},
\end{equation}
which is a first order differential operator.  As a special case, we can obtain a Stein operator for the $F$-distribution with parameters $d_1>0$ and $d_2>0$.  This is because $Z\sim F(d_1,d_2)$ is equal in distribution to $\frac{X_1/d_1}{X_2/d_2}$, where $X_1\sim \chi_{(d_1)}^2$ and $X_2\sim\chi_{(d_2)}^2$ are independent.  Now applying (\ref{invgamma}) and rescaling to take into account the factor $d_1/d_2$ gives the following Stein operator for  $Z$:
\begin{equation}\label{fdist}A_Z=d_1MT_{1-d_2/2}+d_2T_{d_1/2}.
\end{equation}
As this Stein operator seems to be new to the literature and may prove useful in applications due to the importance of the $F$-distribution in statistics, we write out the operator explicitly:
\begin{equation*}A_Zf(x)=(d_1x^2+d_2x)f'(x)+(d_1d_2/2+d_1(1-d_2/2)x)f(x).
\end{equation*} 

%In is interesting to note that (\ref{fdist}) is a first order operator,
%which is lower than the second order operator (\ref{betainvstein}).
%Due to the duality between Stein operators and differential equations
%for densities, we would expect to obtain a first order operator
%because the density of the $F$-distribution,
%\begin{equation*}p(x)=\frac{1}{B(\frac{d_1}{2},\frac{d_2}{2})}\bigg(\frac{d_1}{d_2}\bigg)^{d_1/2}x^{d_1/2-1}\bigg(1+\frac{d_1}{d_2}x\bigg)^{-(d_1+2_2)/2}, \quad x>0,
%\end{equation*}
%is an elementary function, which satisfies a first order linear differential equation.  Here, $B(\frac{d_1}{2},\frac{d_2}{2})$ is the beta function.

One can also easily derive the generalized gamma Stein operator from the gamma Stein operator.  The Stein operator for the $\mathrm{GG}(r,\lambda,q)$ distribution is given by $T_r-q\lambda^qM^q$.  Using the relationship $X\stackrel{\mathcal{L}}{=}(\lambda^{1-q}Y)^{1/q}$ for $X\sim\mathrm{GG}(r,\lambda,q)$ and $Y\sim \Gamma(r/q,\lambda)$ (see \cite{pekoz3}) together with (\ref{powereqn2}) and a rescaling, we readily recover the generalized gamma Stein operator from the usual gamma Stein operator.

As a final example, we note that we can use Proposition \ref{prop:2} to obtain a Stein operator for the ratio of two independent standard normal random variables.  A Stein operator for the standard normal random variable $X_1$ is $T_1-M^2$ and we can apply (\ref{inveqn1}) to obtain the following Stein operator for the random variable $1/X_1$:
\begin{equation*}A_{1/X_1}=M^2T_1-I.
\end{equation*}
Hence a Stein operator for the ratio of two independent standard normals is 
\begin{equation*}A=(1+M^2)T_1,
\end{equation*} 
which is the Stein operator for the Cauchy distribution (a special case of the Student's $t$ Stein operator of \cite{schoutens}), as one would expect.

\section{Duals of Stein operators and densities of product distributions}
\label{sec:form-dens-char}

Fundamental methods, based on the Mellin integral transform, for
deriving formulas for densities of product distributions were
developed by \cite{springer66, springer}.  In \cite{springer},
formulas, involving the Meijer $G$-function, were obtained for
products of independent centered normals, and for mixed products of
beta and gamma random variables.  However, for other product
distributions, applying the Mellin inversion formula can lead to
intractable calculations.

In this section, we present a novel method for deriving formulas for
densities of product distributions based on the duality between Stein
operators and ODEs satisfied by densities.  Our approach builds on
that of \cite{gaunt ngb} in which a duality argument was used to
derive a new formula for the p.d.f$.$ of a mixed product of mutually
independently centered normal, beta and gamma random variables
(deriving such a formula using the Mellin inversion formula would have
required some very involved calculations).  We apply this method to
derive a new formula for the p.d.f. of the product of $n$ independent
$\mathrm{VG}(r,0,\sigma,0)$ random variables. We begin with a duality
lemma whose proof is a generalisation of the argument given in Section
3.2 of \cite{gaunt ngb}.

 \begin{lma}\label{dulem}Let $Z$ be a random variable with density $p$
   supported on an interval $[a, b]\subseteq\R$.  Let  
 \begin{equation}\label{sec5op}Af(x)=T_{r_1}\cdots T_{r_n}f(x)-bx^qT_{a_1}\cdots T_{a_m}f(x),
\end{equation}
 and suppose that
 \begin{equation}\label{exp11}\E[Af(Z)]=0
 \end{equation}
 for all $f\in C^k([a,b])$, where $k=\max\{m,n\}$, such that
 \begin{enumerate}
 \item $x^{q+1+i+j}p^{(i)}(x)f^{(j)}(x)\rightarrow 0$, as
   $x\rightarrow a$ and as $x\rightarrow b$, for all $i,j$ such that
   $0\leq i+j\leq m$;
 \item $x^{1+i+j}p^{(i)}(x)f^{(j)}(x)\rightarrow 0$, as
   $x\rightarrow a$ and as $x\rightarrow b$, for all $i,j$ such that
   $0\leq i+j\leq n$.
 \end{enumerate}
 (We denote this class of functions by $\mathcal{C}_p$).  Then $p$ satisfies the differential equation
 \begin{equation}\label{lolcofe11}T_{1-r_1}\cdots T_{1-r_n}p(x)-b(-1)^{m+n}x^q T_{q+1-a_1}\cdots T_{q+1-a_m}p(x)=0.
 \end{equation}
\end{lma}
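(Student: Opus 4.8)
The plan is to run a duality (integration-by-parts) argument that transfers the operator $A$ off the test function $f$ and onto the density $p$, starting from $0=\E[Af(Z)]=\int_a^b (Af)(x)\,p(x)\,\ud x$. The key computation concerns the building block $T_r=MD+rI$: integrating by parts once,
\[
\int_a^b (T_rf)(x)\,g(x)\,\ud x = \bigl[x f(x) g(x)\bigr]_a^b - \int_a^b f(x)\,(T_{1-r}g)(x)\,\ud x ,
\]
so that, modulo the boundary term $[xf(x)g(x)]_a^b$, the formal adjoint of $T_r$ for the Lebesgue pairing is $-T_{1-r}$. Iterating this $n$ times on the first summand of $A$ gives $\int_a^b T_{r_1}\cdots T_{r_n}f\cdot p=(-1)^n\int_a^b f\cdot T_{1-r_n}\cdots T_{1-r_1}p$ up to boundary terms; since the $T_s$ all commute (they are polynomials in $MD$, cf.\ the discussion around \eqref{eq:recT}) this equals $(-1)^n\int_a^b f\cdot T_{1-r_1}\cdots T_{1-r_n}p$. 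For the second summand, $M^q$ is self-adjoint for the Lebesgue pairing (with no boundary contribution), so $\int_a^b bx^qT_{a_1}\cdots T_{a_m}f\cdot p=b\int_a^b T_{a_1}\cdots T_{a_m}f\cdot x^qp$; iterating the adjoint relation $m$ times and then pushing $M^q$ past the $T$'s by means of the commutation rule $T_{1-a_i}M^q=M^qT_{q+1-a_i}$ from \eqref{eq:recT} turns this into $b(-1)^m\int_a^b f\cdot x^qT_{q+1-a_1}\cdots T_{q+1-a_m}p$, again up to boundary terms.

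Next I would check that all the boundary terms vanish for $f\in\mathcal C_p$. Unwinding the first iteration, the term produced when $T_{r_\ell}$ is moved across is (up to sign) $\bigl[x\,(T_{r_{\ell+1}}\cdots T_{r_n}f)(x)\,(T_{1-r_{\ell-1}}\cdots T_{1-r_1}p)(x)\bigr]_a^b$; expanding each product of $T$'s as a finite linear combination of operators $M^lD^l$ (as in the proof of Lemma \ref{lma:building-blocks}), this is a finite combination of terms $x^{1+i+j}p^{(i)}(x)f^{(j)}(x)$ with $i\le\ell-1$ and $j\le n-\ell$, hence with $0\le i+j\le n-1$, and these all tend to $0$ by hypothesis (2). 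In the same way the boundary terms coming from the second summand are finite combinations of $x^{q+1+i+j}p^{(i)}(x)f^{(j)}(x)$ with $0\le i+j\le m-1$, which vanish by hypothesis (1). Consequently the identity $\E[Af(Z)]=0$ reduces, for every $f\in\mathcal C_p$, to
\[
\int_a^b f(x)\,\Bigl[(-1)^nT_{1-r_1}\cdots T_{1-r_n}p(x)-b(-1)^m x^qT_{q+1-a_1}\cdots T_{q+1-a_m}p(x)\Bigr]\,\ud x = 0 .
\]

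Finally, since $\mathcal C_p$ contains $C_0^\infty\bigl((a,b)\bigr)$ (for such $f$, conditions (1)--(2) hold trivially because all derivatives of $f$ vanish near $a$ and $b$), the bracketed function — which is continuous because $p$ is smooth — must vanish identically on the interior of $[a,b]$; multiplying through by $(-1)^n$ and using $(-1)^n(-1)^m=(-1)^{m+n}$ yields \eqref{lolcofe11}. The only delicate point is the bookkeeping in the middle paragraph: one must verify that at every stage of the two nested integrations by parts the power of $x$ multiplying $p^{(i)}f^{(j)}$ is exactly $1+i+j$ (respectively $q+1+i+j$) and that the index $i+j$ never exceeds $n$ (respectively $m$), so that hypotheses (1) and (2) are precisely what is needed; this is routine but is where all the care lies. (When $[a,b]=\R$, or more generally when the relevant powers of $x$ times derivatives of $p$ decay at infinity, one instead takes $\mathcal C_p$ to contain $C_0^\infty(\R)$ or the Schwartz class, and the argument is unchanged.)
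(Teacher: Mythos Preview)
Your proposal is correct and follows essentially the same duality/integration-by-parts route as the paper. The only cosmetic difference is that the paper packages the $x^q$ weight into a single weighted formula $\int_a^b x^\gamma\phi\,T_r\psi=[x^{\gamma+1}\phi\psi]_a^b-\int_a^b x^\gamma\psi\,T_{\gamma+1-r}\phi$ (applied with $\gamma=q$ for the second summand and $\gamma=0$ for the first), whereas you use the unweighted adjoint $T_r^\ast=-T_{1-r}$ and then commute $M^q$ through via $T_{1-a_i}M^q=M^qT_{q+1-a_i}$; these produce identical boundary terms and identical integrands, and the concluding step (restrict to $C_0^\infty$ and invoke the fundamental lemma of the calculus of variations) is the same.
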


\begin{rmk}The class of functions $\mathcal{C}_p$ consists of all
  $f\in C^k([a,b])$, where $k=\max\{m,n\}$, that satisfy particular
  boundary conditions at $a$ and $b$.  Note that when
  $(a,b)=\R$ the class includes the set of all functions on
  $\R$ with compact support that are $k$ times differentiable.
  The class $\mathcal{C}_p$ suffices for the purpose of deriving the
  differential equation (\ref{lolcofe11}), although we expect that for
  particular densities (such as the beta distribution) the conditions
  on $f$ could be weakened.
 \end{rmk}

 \begin{proof}
 We begin by writing the expectation (\ref{exp11}) as
 \begin{equation}\label{integral1}\int_{a}^b \big\{T_{r_1}\cdots T_{r_n}f(x)-bx^qT_{a_1}\cdots T_{a_m}f(x)\big\}p(x)\,\mathrm{d}x=0,
 \end{equation}
 which exists if $f\in \mathcal{C}_p$.  In arriving at the differential equation (\ref{lolcofe11}), we shall apply integration by parts repeatedly.  To this end, it is useful to note the following integration by parts formula.  Let $\gamma\in\R$ and suppose that $\phi$ and $\psi$ are differentiable.  Then
 \begin{align}\int_a^b x^\gamma\phi(x)T_r\psi(x)\,\mathrm{d}x&=\int_{a}^{b}x^\gamma\phi(x)\{x\psi'(x)+r\psi(x)\}\,\mathrm{d}x =\int_{a}^{b}x^{\gamma+1-r}\phi(x)\frac{\mathrm{d}}{\mathrm{d}x}(x^r\psi(x))\,\mathrm{d}x\nonumber \\
 &=\Big[x^{\gamma+1}\phi(x)\psi(x)\Big]_{a}^{b}-\int_{a}^{b}x^r\psi(x)\frac{\mathrm{d}}{\mathrm{d}x}(x^{\gamma+1-r}\phi(x))\,\mathrm{d}x\nonumber \\
 \label{skts}&=\Big[x^{\gamma+1}\phi(x)\psi(x)\Big]_{a}^{b}-\int_{a}^{b}x^\gamma\psi(x)T_{\gamma+1-r}\phi(x)\,\mathrm{d}x,
 \end{align}
 provided the integrals exist.  

 We now return to equation (\ref{integral1}) and use the integration by parts and formula (\ref{skts})  to obtain a differential equation that is satisfied by $p$.  Using (\ref{skts}) we obtain
 \begin{align*}\int_{a}^{b}x^qp(x)T_{a_1}\cdots T_{a_m}f(x)\,\mathrm{d}x
 &=\Big[x^{q+1}p(x)T_{a_2}\cdots T_{a_m}f(x)\Big]_{a}^{b}\\
 &\quad-\int_{a}^{b}x^qT_{q+1-a_1}p(x)T_{a_2}\cdots T_{a_m}f(x)\,\mathrm{d}x \\
 &=-\int_{a}^{b}x^qT_{q+1-a_1}p(x)T_{a_2}\cdots T_{a_m}f(x)\,\mathrm{d}x,
 \end{align*}
 where we used condition (i) to obtain the last equality.  By a repeated application of integration by parts, using formula (\ref{skts}) and condition (i), we arrive at
 \begin{align*}\int_{a}^{b}x^qp(x)T_{a_1}\cdots T_{a_m}f(x)\,\mathrm{d}x=(-1)^m\int_{a}^{b}x^qf(x)T_{q+1-a_1}\cdots T_{q+1-a_m}p(x)\,\mathrm{d}x.
 \end{align*}
 By a similar argument, this time using formula (\ref{skts}) and condition (ii),  we obtain
 \begin{align*}\int_{a}^{b}p(x)T_{r_1}\cdots T_{r_n}f(x)\,\mathrm{d}x=(-1)^n\int_{a}^{b}f(x)T_{1-r_1}\cdots T_{1-r_n}p(x)\,\mathrm{d}x.
 \end{align*}
 Putting this together we have that
 \begin{align}\label{integral2}\int_{a}^b \{(-1)^nT_{1-r_1}\cdots T_{1-r_n}p(x)-b(-1)^mx^q T_{q+1-a_1}\cdots T_{q+1-a_m}p(x)\}f(x)\,\mathrm{d}x=0
 \end{align}
 for all $f\in\mathcal{C}_p$.  Since (\ref{integral2}) holds for all $f\in\mathcal{C}_p$, we deduce (from an argument analogous to that used to prove the fundamental lemma of the calculus of variations) that $p$ satisfies the differential equation (\ref{lolcofe11}).  This completes the proof.
 \end{proof}

 % \subsection{Application to obtaining formulas for densities}
 % \label{sec:appl-obta-form}
 We now show how the duality Lemma \ref{dulem} can be exploited to
 derive formulas for densities of distributions.  By duality, $p$
 satisfies the differential equation (\ref{lolcofe11}), and making the change of variables $y=\frac{b}{q^{n-m}}x^q$ yields the following differential equation
 \begin{equation}\label{difnear}T_{\frac{1-r_1}{q}}\cdots T_{\frac{1-r_n}{q}}p(y)-(-1)^{m+n}y T_{\frac{q+1-a_1}{q}}\cdots T_{\frac{q+1-a_m}{q}}p(y)=0.
 \end{equation}
 We recognise (\ref{difnear}) as an instance of the Meijer
 $G$-function differential equation (\ref{meidiffeqn}).  There are
 $\max\{m,n\}$ linearly independent solutions to (\ref{difnear}) that
 can be written in terms of the Meijer $G$-function (see \cite{olver},
 Chapter 16, Section 21).  Using a change of variables, we can thus
 obtain a fundamental system of solutions to (\ref{lolcofe11}) given as
 Meijer $G$-functions.  One can then arrive at a formula for the
 density by imposing the conditions that the solution must be
 non-negative and integrate to 1 over the support of the distribution.
 Due to the difficulty of handling the Meijer $G$-function, this final
 analysis is in general not straightforward.  However, one can
 ``guess'' a formula for the density based on the fundamental system
 of solutions, and then verify that this is indeed the density by an
 application of the Mellin transform (note that in this verification
 step there is no need to use the Mellin inversion formula).  An
 interesting direction for future research would be to develop
 techniques for identifying formulas for densities of distributions
 based solely on an analysis of the differential equation
 (\ref{lolcofe11}).  However, even as it stands, we have a technique for
 obtaining formulas for densities that may be intractable through
 standard methods.

 \vspace{2mm}
 
 \noindent{\emph{Products of $\mathrm{VG}(r,0,\sigma,0)$ random variables.}} Let $(Z_i)_{1\leq i\leq n}\sim\mathrm{VG}(r_i,0,\sigma_i,0)$ be independent, and set $Z=\prod_{i=1}^nZ_i$.
Recall the Stein operator (\ref{ngp21}) for the product of $\mathrm{VG}(r_i,0,\sigma_i,0)$ distributed random variables: 
 \begin{equation*}A_Zf(x)=\sigma^2 T_1^{n} T_{r_1} \ldots T_{r_n} -  M^2,
 \end{equation*}
 where $\sigma^2=\sigma_1^2 \ldots \sigma_n^2$.  By Lemma \ref{dulem}, it follows that the density $p$ satisfies the following differential equation:
\begin{equation}\label{vgode}T_0^nT_{1-r_1}\cdots T_{1-r_n}p(x)-\sigma^{-2}x^2p(x)=0.
\end{equation} 
Arguing as we did to obtain the ODE (\ref{difnear}), we make the substitution $y=\frac{x^2}{2^{2n}\sigma^2}$ to reduce (\ref{vgode}) to a G-function ODE of the type (\ref{meidiffeqn}).  We can therefore identify that the following function is a solution to (\ref{vgode}):
\begin{equation*}\label{vgdsoln}p(x)=CG^{2n,0}_{0,2n}\bigg(\frac{x^2}{2^{2n}\sigma^2}\; \bigg| \; \frac{r_1-1}{2},\ldots,\frac{r_n-1}{2},0,\ldots,0 \bigg),
\end{equation*}
where $C$ is an arbitrary constant.  We can apply (\ref{meijergintegration2}) to choose $C$ such the $p$ integrates to 1 across its support:
\begin{equation}\label{vgdsoln}p(x)=\frac{1}{2^n\pi^{n/2}\sigma}\prod_{j=1}^n\frac{1}{\Gamma(r_j/2)}G^{2n,0}_{0,2n}\bigg(\frac{x^2}{2^{2n}\sigma^2}\; \bigg| \; \frac{r_1-1}{2},\ldots,\frac{r_n-1}{2},0,\ldots,0 \bigg).
\end{equation}
We verify that this `guess' this is indeed the density using Mellin transforms; note that this verification is much more straightforward than an application of the Mellin inversion formula.

Let us define the Mellin transform and state some properties that will be useful to us.  The Mellin transform of a non-negative random variable $U$ with density $p$ is given by $M_U(s)=\E U^{s-1}$, for all $s$ such that the expectation exists.  If the random variable $U$ has density $p$ that is symmetric about the origin then we can define the Mellin transform of $U$ by
\begin{equation}\label{melu}M_U(s)=2\int_0^{\infty}x^{s-1}p(x)\,\mathrm{d}x.
\end{equation}
The Mellin transform is useful in determining the distribution of products of independent random variables due to the property that if the random variables $U$ and $V$ are independent then $M_{UV}(s)=M_U(s)M_V(s)$.

To obtain the Mellin transform of $Z=\prod_{i=1}^nZ_i$, we recall that $Z_i\stackrel{\mathcal{L}}{=}X_i\sqrt{Y_i}$, where $X_i\sim \mathcal{N}(0,\sigma_i^2)$ and $Y_i\sim \Gamma(r/2,1/2)$ are independent.  Using the formulas for the Mellin transforms of the normal and gamma distributions (see \cite{springer}), we have that
\begin{equation*}M_{X_i}(s)=\frac{1}{\sqrt{\pi}}2^{(s-1)/2}\sigma_i^{s-1}\Gamma(1/2), \quad M_{\sqrt{Y_i}}(s)= M_{Y_i}((s+1)/2)=2^{(s-1)/2}\frac{\Gamma(\frac{r_i-1+s}{2})}{\Gamma(r_i)},
\end{equation*}
and therefore
\begin{equation}\label{meleq}M_Z(s)=\frac{1}{\pi^{n/2}}2^{n(s-1)}\sigma^{s-1}[\Gamma(s/2)]^n\prod_{i=1}^n\frac{\Gamma(\frac{r_i-1+s}{2})}{\Gamma(\frac{r_i}{2})}.
\end{equation} 
Now, let $W$ denote a random variable with density (\ref{vgdsoln}).  Then, using (\ref{melu}) and (\ref{meijergintegration2}) gives that
\begin{align*}M_W(s)=2\times \frac{1}{2^n\pi^{n/2}\sigma}\prod_{j=1}^n\frac{1}{\Gamma(r_j/2)}\times \bigg(\frac{1}{2^{2n}\sigma^2}\bigg)^{-s/2}\times [\Gamma(s/2)]^n\times \prod_{i=1}^n\Gamma\bigg(\frac{r_i-1+s}{2}\bigg),
\end{align*}
which is equal to (\ref{meleq}).  Since the Mellin transforms of $W$ is equal to that of $Z$, it follows that $W$ is equal in law to $Z$.  Therefore (\ref{vgdsoln}) is indeed the p.d.f$.$ of the random variable $Z$.

\appendix

\section{List of Stein operators for continuous distributions}
\label{appendix:list}

Recall that $Mf(x) = x f(x)$, $I$ is the identity and $T_a f(x) = x f'(x) + a f(x)$.  We also recall that the beta function is defined by $B(a,b)=\frac{\Gamma(a)\Gamma(b)}{\Gamma(a+b)}$, and that $U(a,b,x)$ and $K_{\nu}(x)$ denote the confluent hypergeometric function of the second kind (\cite{olver}, Chapter 13) and the modified Bessel function of the second kind (\cite{olver}, Chapter 10), respectively.

We give a list of Stein operators for several classical probability distributions, in terms of the above operators.  References for these Stein operators are as follows: normal \cite{stein}, gamma \cite{diaconis, luk}, beta \cite{dobler beta, goldstein4, schoutens}, Student's $t$ \cite{schoutens}, inverse-gamma \cite{koudou}, $F$-distribution (new to this paper), PRR \cite{pekoz}, variance-gamma \cite{gaunt vg}, and generalized gamma \cite{gaunt ngb}. 
%Note the operators in Table \ref{tab:2} for the normal, PRR and variance-gamma distributions differ slightly from the usual operators from the literature, as they act on a different class of functions.  Bounds for solutions of a number of the Stein equations corresponding to the Stein operators of Table \ref{tab:2} are collected in \cite{dgv}.

The usual Stein operators (as defined in the above references) for the normal, PRR and variance-gamma distributions, are not in the form required in Section \ref{sec:general-results}. In these cases, we multiply the operators by $M$ on the right for the normal and variance-gamma distributions, and we multiply the operator by $M^2$ on the right for the PRR distribution. 
%(which is equivalent to applying them to $xf(x)$ instead of $f(x)$).
%It is important to note that by doing so, we change the class of functions the operators act on: if $\mathcal A$ acts on $\mathcal F$, then $\mathcal A M$ acts on $\{ f: Mf \in \mathcal F \}$ (in particular, if $\mathcal A f$ is defined when $f$ is smooth with compact support, so is $\mathcal A M$).

%We show in more detail the normal case.  The centered normal distribution with variance $\sigma^2$ has usual Stein operator given by $Af(x) = \sigma^2 f'(x) - x f(x)$, which reads, in our notation, $A = \sigma^2 D - M$. Applying this operator to $x f(x)$ instead of $f(x)$, or, equivalently, multiplying it on the right by $M$ leads to the new Stein operator $\tilde{ A} = \sigma^2 DM - M^2$. But $DM = MD+I = T_1$, so that $\tilde {A} = \sigma^2 T_1 - M^2$. This operator is indeed of the form of \eqref{eq:axlma}. The same trick is used for the PRR distribution and the variance-gamma distribution.

\begin{table}
\label{tab:1}
\begin{center}
\begin{tabular}{|c|c|c|}
\hline
Distribution & Parameters & Notation \\
\hline
 Normal & $\mu,\sigma \in \R$ & $\mathcal N(\mu,\sigma^2)$  \\
\hline
Gamma &$r,\lambda>0$ & $\Gamma(r,\lambda)$ \\
\hline
Beta & $a,b>0$ & $\mathrm{Beta}(a,b)$ \\
\hline
Student's $t$ &$\nu>0$ & $t_\nu $ \\
\hline
Inverse-gamma & $\alpha,\beta>0$ & $IG(\alpha,\beta)$ \\
\hline
$F$-distribution &$d_1,d_2>0$ & $F(d_1,d_2)$\\
\hline
PRR distribution & $s>1/2$ & $PRR_s$\\
\hline
Variance-gamma &  $r , \sigma> 0$, $\theta, \mu \in \R$ &$\mathrm{VG}(r,\theta,\sigma,\mu)$ \\
\hline
Generalized gamma & $r,\lambda,q>0$ & $\mathrm{GG}(r,\lambda,q)$\\
\hline
%K-distribution (1) & $ \mu>0, \nu> L>0$ &  $KD_1( \mu, \nu, L)$\\
%  \hline
%  K-distribution (2) & $ \lambda, c>0$ &  $KD_2( \lambda, c)$\\
%  \hline
%  Pareto & $\alpha,\beta>0$ & $\mathrm{Pareto}(\alpha,\beta)$ \\
%\hline
\end{tabular}
\end{center}
\caption{Distributions}
\end{table}

\begin{table}
\label{tab:2}
\begin{center}
\begin{tabular}{|c|c|c|}
\hline
Distribution & p.d.f$.$ & Stein operator\\
\hline
  $\mathcal N(\mu,\sigma^2)$  & $\frac{1}{\sqrt{2\pi}\sigma}\mathrm{e}^{-(x-\mu)/\sigma^2}$ & $\sigma^2 T_1 +\mu M- M^2$ \\
\hline
$\Gamma(r,\lambda)$& $\frac{\lambda^r}{\Gamma(r)}x^{r-1}\mathrm{e}^{-\lambda x} \indic{x>0}$ &  $T_r-\lambda M$.\\
\hline
 $\mathrm{Beta}(a,b)$& $\frac{1}{B(a,b)}x^{a-1}(1-x)^{b-1}\indic{0<x<1}$ & $T_a-MT_{a+b}$\\
\hline
 $t_\nu $&  $\frac{\Gamma(\frac{\nu+1}{2})}{\sqrt{\nu\pi}\Gamma(\frac{\nu}{2})}\big(1+\frac{x^2}{\nu}\big)^{-(\nu+1)/2}$  & $\nu T_1+M^2T_{2-\nu}$\\
\hline
$IG(\alpha,\beta)$& $\frac{\beta^\alpha}{\Gamma(\alpha)}x^{-\alpha-1}\mathrm{e}^{-\beta /x}\, \indic{x>0}$ & $\beta I+MT_{1-\alpha}$\\
\hline
$F(d_1,d_2)$&$\frac{1}{B(\frac{d_1}{2},\frac{d_2}{2})}\big(\frac{d_1}{d_2}\big)^{d_1/2}x^{d_1/2-1}\big(1+\frac{d_1}{d_2}x\big)^{-(d_1+2_2)/2} \; \indic{x>0}$ & $d_2T_{d_1/2}+d_1MT_{1-d_2/2}$\\
\hline
$PRR_s$ & $\Gamma(s)\sqrt{\frac{2}{s\pi}}\exp\big(-\frac{x^2}{2s}\big)U\big(s-1,\frac{1}{2},\frac{x^2}{2s}\big) \indic{x>0}$ & $sT_1T_2-M^2T_{2s}$\\
\hline
$\mathrm{VG}(r,\theta,\sigma, \mu = 0)$ &  $\frac{1}{\sigma\sqrt{\pi} \Gamma(\frac{r}{2})} \mathrm{e}^{\frac{\theta}{\sigma^2} x} \big(\frac{|x|}{2\sqrt{\theta^2 +  \sigma^2}}\big)^{\frac{r-1}{2}} K_{\frac{r-1}{2}}\big(\frac{\sqrt{\theta^2 + \sigma^2}}{\sigma^2} |x| \big)$ & $\sigma^2T_1T_r+2\theta MT_{r/2}-M^2$\\
\hline
 $\mathrm{GG}(r,\lambda,q)$ & $\frac{q \lambda^r}{ \Gamma(r/q)} x^{r-1}\mathrm{e}^{-(\lambda x)^q} \, \indic{x>0}$ & $T_r - q\lambda^q M^q$ \\
 \hline
%  $KD_1( \mu, \nu, L)$ & $\frac{2 \left( \sqrt{\frac{L \nu}{\mu}}
%    \right)^{L+\nu}\sqrt{x}^{L+ \nu- 2}}{ \Gamma(L)\Gamma(\nu)}
%  K_{\nu-L}\left( 2 \sqrt{\frac{L \nu}{\mu}} \sqrt{x} \right)$  & $\frac{\mu}{L \nu}    T_L T_{\nu} - M$ \\
% \hline
%  $KD_2( \lambda, c)$ &  $\frac{2
%    c}{\Gamma(\lambda)} \left( \frac{cx}{2}
%  \right)^{\lambda} K_{\lambda-1}(cx) $ & $\frac{\mu}{\lambda}  T_2 T_{2\lambda} - 2  M^2$ \\
% \hline
%$\mathrm{Pareto}(\alpha,\beta)$ & $\frac{\alpha\beta^\alpha}{(x+\beta)^{\alpha+1}} \, \indic{x\geq0}$ & $\beta T_1+MT_{1-\alpha}$ \\
%\hline
\end{tabular}
\end{center}
\caption{p.d.f$.$ and Stein operator of some classical
  distributions.}
\end{table}

\section{The Meijer $G$-function}
\label{sec:meijerg}

%Here we define the  Meijer $G$-function and present some of its basic properties that are relevant to this paper.  For further properties of this function see \cite{luke, olver}.

The Meijer $G$-function is defined (see see \cite{luke, olver}), for $z\in\C\setminus\{0\}$, by the contour integral:
\[G^{m,n}_{p,q}\bigg(z \; \bigg|\; {a_1,\ldots, a_p \atop b_1,\ldots,b_q} \bigg)=\frac{1}{2\pi i}\int_{c-i\infty}^{c+i\infty}z^{-s}\frac{\prod_{j=1}^m\Gamma(s+b_j)\prod_{j=1}^n\Gamma(1-a_j-s)}{\prod_{j=n+1}^p\Gamma(s+a_j)\prod_{j=m+1}^q\Gamma(1-b_j-s)}\,\mathrm{d}s,\]
where $c$ is a real constant defining a Bromwich path separating the poles of $\Gamma(s + b_j)$ from those of $\Gamma(1- a_j- s)$ and where we use the convention that the empty product is $1$.

The following formula follows from \cite{luke}, formula (1) of Section 5.6 and a change of variables:
\begin{equation}\label{meijergintegration2}\int_0^{\infty}x^{s-1}G_{p,q}^{m,n}\bigg(\alpha x^\gamma \; \bigg| \;{a_1,\ldots,a_p \atop b_1,\ldots,b_q}\bigg)\,\mathrm{d}x=\frac{\alpha^{-s/\gamma}}{\gamma}\frac{\prod_{j=1}^m\Gamma(b_j+\frac{s}{\gamma})\prod_{j=1}^n\Gamma(1-a_j-\frac{s}{\gamma})}{\prod_{j=m+1}^q\Gamma(1-b_j-\frac{s}{\gamma})\prod_{j=n+1}^p\Gamma(a_j+\frac{s}{\gamma})}.
\end{equation}
For the conditions under which this formula is valid see \cite{luke}, pp$.$ 158--159.  In particular, the formula is valid when $n=0$, $1\leq p+1\leq m\leq q$ and $\alpha>0$.

The $G$-function $f(z)=G^{m,n}_{p,q}\big(z\big|{a_1,\ldots,a_{p} \atop b_1,\ldots,b_q}\big)$ satisfies the differential equation
\begin{equation}\label{meidiffeqn}(-1)^{p-m-n}zT_{1-a_1}\cdots T_{1-a_p}f(z)-T_{-b_1}\cdots T_{-b_q}f(z)=0.
\end{equation}

\section*{Acknowledgements} The authors would like to thank the
referees for their constructive comments, particularly to one referee
for their careful reading of our paper and their suggestions which lead to a substantial improvement in the
organisation of the paper. RG acknowledges support from EPSRC grant
EP/K032402/1 and is currently supported by a Dame Kathleen Ollerenshaw
Research Fellowship.  RG is grateful to Universit\'e de Li\`ege, FNRS
and EPSRC for funding a visit to University de Li\`ege, where some of
the details of this project were worked out.  YS gratefully
acknowledges support by the Fonds de la Recherche Scientifique - FNRS
under Grant MIS F.4539.16. Part of GM's research was supported by a WG
(Welcome Grant) from Universit\'e de Li\`ege.

\footnotesize

\end{document}